\theoremstyle{plain}
\newtheorem{thm}{Theorem}[section]
\newtheorem{prop}[thm]{Proposition}
\newtheorem{cor}[thm]{Corollary}
\newtheorem{lem}[thm]{Lemma}
\theoremstyle{definition}
\newtheorem{rem}[thm]{Remark}
\newcommand{\be}{\begin{equation}}
\newcommand{\ee}{\end{equation}}
\def\cal{\mathcal}
\def\R{{\mathbb R}}
\def\D{{\mathbb D}}
\def\N{{\mathbb N}}
\def\C{{\mathbb C}}
\def\S3{{{\mathbb S}^3}}
\def\SU2{{{\rm SU}(2)}}
\def\Rn{{{\mathbb R}^n}}
\def\Tn{{{\mathbb T}^n}}
\def\Zn{{{\mathbb Z}^n}}
\def\Gh{{\widehat{G}}}
\def\Lap{{\mathcal L}}
\def\p#1{{\left({#1}\right)}}
\def\jp#1{{\left\langle{#1}\right\rangle}}
\def\n#1{{\left\|{#1}\right\|}}
\def\abs#1{{\left|{#1}\right|}}
\def\Rep{{{\rm Rep}}}
\def\Op{{{\rm Op}}}
\def\Repa{{{\rm Re}}}
\def\Dcal{{\mathcal D}}
\def\Hcal{{\mathcal H}}
\def\drm{{\ {\rm d}}}
\def\irm{{\rm i}}
\def\erm{{\ {\rm e}}}
\def\va{\varphi}
\def\Ccl{{\mathscr S}}
\def\G{{G}}
\DeclareMathOperator{\Tr}{Tr}
\def\diff{\mathrm{diff}}
\def\Diff{\mathrm{Diff}}
\DeclareMathOperator{\rank}{rank}
\DeclareMathOperator{\singsupp}{sing\,supp}
\def\d{\mathrm d}
\begin{document}

\title[Sharp G{\aa}rding inequality on compact Lie groups]
{Sharp G{\aa}rding inequality on compact Lie groups}

\author[Michael Ruzhansky]{Michael Ruzhansky}
\address{
  Michael Ruzhansky:
  \endgraf
  Department of Mathematics
  \endgraf
  Imperial College London
  \endgraf
  180 Queen's Gate, London SW7 2AZ 
  \endgraf
  United Kingdom
  \endgraf
  {\it E-mail address} {\rm m.ruzhansky@imperial.ac.uk}
  }

\author[Ville Turunen]{Ville Turunen}
\address{
  Ville Turunen:
  \endgraf
   Aalto University
  \endgraf
  Institute of Mathematics
  \endgraf
   P.O. Box 1100
  \endgraf
   FI-00076 AALTO
  \endgraf
  Finland
  \endgraf
  {\it E-mail address} {\rm ville.turunen@hut.fi}
  }

\thanks{The first
 author was supported in part by the EPSRC
 Leadership Fellowship EP/G007233/1.}
\date{\today}

\subjclass{Primary 35S05; Secondary 22E30}
\keywords{Pseudo-differential operators, compact Lie groups,
microlocal analysis, G{\aa}rding inequality}

\begin{abstract}
We establish the sharp G{\aa}rding inequality on compact Lie groups.
The positivity condition is expressed in the non-commutative
phase space in terms of the full symbol, which is defined using the
representations of the group. Applications are given to
the $L^2$ and Sobolev boundedness of pseudo-differential operators.
\end{abstract}

\maketitle

\section{Introduction}

The sharp G{\aa}rding inequality on $\Rn$ is one of the most
important tools of the microlocal analysis with numerous
applications in the theory of partial differential equations.
Improving on the original G{\aa}rding inequality in \cite{Garding},
H\"ormander \cite{Hormander} showed that if 
$p\in S^m_{1,0}(\Rn)$ and $p(x,\xi)\geq 0$, then
\begin{equation}\label{EQ:Ga-sharp1}
\Repa (p(x,D)u,u)_{L^2}\geq -C\|u\|^2_{H^{(m-1)/2}}
\end{equation} 
holds for all $u\in C_0^\infty(\Bbb R^n)$. The scalar case
was also later extended to matrix-valued operators
by Lax and Nirenberg \cite{LaxNirenberg}, 
Friedrichs \cite{Friedrichs} and Vaillancourt
\cite{Va}. Further improvements on the lower bound
in the scalar case were also obtained by
Beals and Fefferman \cite{BF} and Fefferman and Phong
\cite{FP}. 

Notably, the sharp G{\aa}rding inequality \eqref{EQ:Ga-sharp1}
requires the condition $p(x,\xi)\geq 0$ imposed on the
full symbol. This is different from the original
G{\aa}rding inequality for elliptic operators which
can be readily extended to manifolds. 
The main difficulty in obtaining \eqref{EQ:Ga-sharp1} in the
setting of manifolds is that the full symbol of an operator
can not be invariantly defined via its localisations.
While the standard localisation 
approach still yields the principal symbol
and thus the standard G{\aa}rding inequality, it can not
be extended to produce an improvement of the type in
\eqref{EQ:Ga-sharp1}. Nevertheless, for
pseudo-differential operators
$P\in\Psi^2(M)$ on a compact
manifold $M$, under certain geometric 
restrictions on the characteristic variety of the principal 
symbol $p_2\geq 0$ and certain hypothesis on $p_1$, 
Melin \cite{Melin} and H\"ormander \cite{Horm2} obtained
a lower bound known as the H\"ormander--Melin
inequality. See also Taylor \cite{Tay-book}.

The aim of the present paper is to
establish the lower bound \eqref{EQ:Ga-sharp1}
on any compact Lie group $G$,
with the statement given in Theorem \ref{THM:Garding}.
On compact Lie groups, the non-commutative analogue of the phase space
is $G\times\Gh$, where $\Gh$ is the unitary dual of $G$.
We use a global quantization of operators
on $G$ consistently developed by the 
authors in \cite{RT-groups} and \cite{RT-book}.
For a continuous linear operator $A:C^\infty(G)\to\Dcal'(G)$
it produces a full matrix-valued symbol $\sigma_A(x,\xi)$
defined for $(x,[\xi])\in G\times\Gh$.
Thus, in Theorem \ref{THM:Garding} we will show 
the lower bound \eqref{EQ:Ga-sharp1} under the assumption
that the full symbol satisfies $\sigma_A\geq 0$,
i.e. when the matrices $\sigma_A(x,\xi)$ are positive
for all $(x,[\xi])\in G\times\Gh$.
In general, if a full symbol is positive in the phase space,
the corresponding pseudo-differential operator
does not have to be positive in the operator sense. However, 
it still has lower bounds like the one in \eqref{EQ:Ga-sharp1}.
An important example is the group
$\SU2\cong\mathbb S^3$, with the group operation (matrix product) in
$\SU2$ corresponding to the quaternionic product in
$\mathbb S^3$. Details of the global quantization
have been worked out in 
\cite{RT-book, RT-groups}.

We note that the standard
G{\aa}rding inequality on compact Lie groups was derived in
\cite{BGJR} using Langlands' results for semigroups
on Lie groups \cite{Lang}, but no quantization yielding
full symbols is required in this case because of the
ellipticity assumed on the operator. The global quantization
used in \cite{RT-groups} and \cite{RT-book} will be briefly
reviewed in Section \ref{SEC:Ga-prelim}. We note that it is
different from the one considered by Taylor \cite{Tay-noncomm}
because we work directly on the group without referring to the
exponential mapping and the symbol classes on the Lie algebra.

We note that one of the assumptions for the H\"ormander--Melin
inequality to hold is the vanishing of the principal symbol
$p_2\geq 0$ on the set $\{p_2=0\}$ to exactly second order.
Thus, for example, it does not apply to operators of the
form $-\partial_X^2$ plus lower order terms, where 
$\partial_X$ is 
the derivative with respect to a vector field $X$,
unless $\dim G=1$. For higher order operators, again,
the operator $\partial_X^4-\Lap_G$, with the bi-invariant Laplace
operator $\Lap_G$,
gives an example when the H\"ormander--Melin inequality
does not work while the full matrix--valued symbol is 
positive definite, so that Theorem \ref{THM:Garding} applies.
The relaxation of the transversal ellipticity has been analysed
recently by Mughetti, Parenti and Parmeggiani, and we refer
to \cite{MPP} for further details on this subject.

A usual proof of \eqref{EQ:Ga-sharp1} in $\Rn$ relies on
the Friedrichs symmetrisation of an operator done in the
frequency variables (\cite{Hormander}, \cite{LaxNirenberg},
\cite{Friedrichs}, \cite{Kumano-go}, \cite{Tay-book}).
This does not readily work in the setting of Lie groups
because the unitary dual $\Gh$ forms only a lattice
which does not behave well enough for this type of arguments.
Thereby
our construction uses mollification in 
$x$-space, more resembling those used by
Calder\'on \cite{Ca} or Nagase \cite{Na} for the
proof of the sharp G{\aa}rding inequality in $\Rn$.
Other proofs, e.g. using the anti-Wick quantisation, are also
available on $\Rn$, see \cite{Ler} and references therein.
We would also like to point out that the proof of
Nagase \cite{Na} can be extended to prove \eqref{EQ:Ga-sharp1}
on the torus $\Tn$ under the assumption that the toroidal
symbol $p(x,\xi)$ of the operator $P\in\Psi^m(\Tn)$
satisfies $p(x,\xi)\geq 0$ for all $x\in\Tn$ and $\xi\in\Zn$.
The toroidal quantization necessary for this proof was
developed by the authors in \cite{RT-torus} but we will not
give such a proof here because such result is now included
as a special
case of Theorem \ref{THM:Garding} which covers the
non-commutative groups as well. The system as well as
$(\rho,\delta)$ versions of the sharp G{\aa}rding inequality
will appear elsewhere.

The proof of Theorem \ref{THM:Garding} consists of
approximating the operator $A$ with non-negative symbol $\sigma_A$
by a positive operator $P$.
Although this approximation has
a symbol of type $(1,1/2)$ and not of type $(1,0)$, it is
enough to prove Theorem \ref{THM:Garding} due to additional
cancellations in the error terms, ensured by the construction.
We note that working with symbol classes of type
$(1,1/2)$ is a genuine global feature of the proof and of
our construction because
the operators of such type can not be defined in local
coordinates.

As usual, for a compact Lie group $G$ we denote by
$\Psi^m(G)$ the H\"ormander pseudo-differential 
operators on $G$, i.e. the class of operators which
in all local coordinate charts give operators in
$\Psi^m(\Rn)$. Operators in $\Psi^m(\Rn)$ are characterised
by the symbols satisfying
$$
|\partial_\xi^\alpha\partial_x^\beta a(x,\xi)|\leq
C(1+|\xi|)^{m-|\alpha|}
$$
for all multi-indices $\alpha, \beta$ and all
$x,\xi\in\Rn$. An operator in $\Psi^m(G)$ is called elliptic
if all of its localisations are locally elliptic.
Here and in the sequel we use the standard notation for the
multi-indices $\alpha=(\alpha_1,\ldots,\alpha_\mu)\in\N_0^\mu$,
where $\mu$ may vary throughout the paper depending on the
context. 

The paper is organised as follows. In Section 
\ref{SEC:GA-sharp-ineq} we introduce the full matrix-valued
symbols and state the sharp G{\aa}rding inequality in Theorem
\ref{THM:Garding}. We apply it
in Corollaries \ref{COR:Ga-Taylorthm} and \ref{COR:Ga-Kgothm}
to the $L^2$ boundedness of pseudo-differential operators.
In Section \ref{SEC:Ga-prelim} we collect
facts necessary for the proof, and develop
an expansion of amplitudes of type $(\rho,\delta)$
required for our analysis. In Section \ref{SEC:Ga-proofs}
we approximate operators with positive symbols
by positive operators and derive the error estimates.

The authors would like to 
thank Jens Wirth for discussions and a referee for useful remarks.

\section{Sharp G{\aa}rding inequality}
\label{SEC:GA-sharp-ineq}

Let $G$ be a compact Lie group of dimension $n$
with the neutral element $e$. Its Lie algebra will be denoted by
$\mathfrak g$.
We now fix the necessary notation. Let $\Gh$
denote the unitary dual of $G$, i.e. 
set of all equivalence classes of
(continuous) irreducible unitary
representations of $G$ and let $\Rep(G)$ be the set of
all such representations of $G$. 

For $f\in C^\infty(G)$ and
$\xi\in\Rep(G)$, let
$$\widehat{f}(\xi)=\int_G f(x)\ \xi(x)^*\drm x$$ 
be the
(global) Fourier transform of $f$,
where integration is with respect to the normalised 
Haar measure on $G$. 
For an irreducible unitary representation 
$\xi:G\to{\cal U}({\cal H}_\xi)$ we have the linear operator
$
  \widehat{f}(\xi):{\cal H}_\xi\to{\cal H}_\xi.
$
Denote by $\dim(\xi)$ the
dimension of $\xi$, $\dim(\xi)=\dim {\cal H}_\xi$.
If $\xi$ is a matrix representation,
we have $\widehat{f}(\xi)\in\C^{\dim(\xi)\times\dim(\xi)}.$
Since $G$ is compact, $\Gh$ is discrete and all of its
elements are finite dimensional.
Consequently, by the Peter--Weyl theorem we have
the Fourier inversion formula
$$
  f(x)=\sum_{[\xi]\in\Gh}\dim(\xi)\Tr\left(\xi(x)\ \widehat{f}(\xi)\right).
$$
The Parseval identity takes the form
$$\|f\|^2_{L^2(G)}=\sum_{[\xi]\in\Gh}\dim(\xi)
\|\widehat{f}(\xi)\|_{HS}^2,$$ 
where
$\|\widehat{f}(\xi)\|_{HS}^2=\Tr(\widehat{f}(\xi)
\widehat{f}(\xi)^*)$,
which gives the
norm on $\ell^2(\Gh)$. For a linear continuous
operator from $C^\infty(G)$ to $\Dcal'(G)$ we introduce its
{\em full matrix-valued symbol} $\sigma_A(x,\xi)\in\C^{\dim(\xi)
\times\dim(\xi)}$ by
$$
\sigma_A(x,\xi)=\xi(x)^* (A\xi)(x).
$$
Then it was shown in \cite{RT-groups} and \cite{RT-book} that
\begin{equation}\label{EQ:Ga-A-op-def}
  Af(x)
  =\sum_{[\xi]\in\Gh}\dim(\xi)\Tr\p{\xi(x)\ \sigma_A(x,\xi)\ \widehat{f}(\xi)}
\end{equation} 
holds in the sense of distributions, and the sum is independent
of the choice of a representation $\xi$ from each class
$[\xi]\in\Gh$. Moreover, we have 
$$\sigma_A(x,\xi)=\int_G R_A(x,y)\ \xi(y)^*\drm y$$ 
in the sense of distributions,
where
$R_A$ is the right-convolution kernel of $A$:
$$
  Af(x)=\int_G K(x,y)\ f(y)\drm y=\int_G f(y)\ 
  R_A(x,y^{-1}x)\drm y.
$$
Symbols $\sigma_A$ can be viewed as mappings on $G\times\Gh$:
the symbol of a continuous linear operator
$A:C^\infty(G)\to C^\infty(G)$ is a mapping
\begin{equation*}
  \sigma_A:G\times\Rep(G)
  \to\bigcup_{\xi\in\Rep(G)}{\rm End}
  ({\Hcal}_\xi),
\end{equation*}
where $\sigma_A(x,\xi):{\Hcal}_\xi\to{\Hcal}_\xi$ is linear
for every $x\in G$ and $\xi\in{\rm Rep}(G)$,
see \cite[Rem.~10.4.9]{RT-book}, and
${\rm End}({\Hcal}_\xi)$ is the space of all linear mappings
from ${\Hcal}_\xi$ to ${\Hcal}_\xi$.
If $\eta\in[\xi]$, i.e. there is an intertwining isomorphism
$U:{\cal H}_\eta\to{\cal H}_\xi$ such that
$\eta(x)=U^{-1}\xi(x)U$, then
$\sigma_A(x,\eta)=U^{-1}\sigma_A(x,\xi)U$.
In this sense we may think that the symbol $\sigma_A$
is defined on $G\times\Gh$ instead of $G\times\Rep(G)$.
For further details of these constructions and their
properties we refer to \cite{RT-book}.

A (possibly unbounded) linear operator $P$ 
on a Hilbert space $\cal H$
is called {\it positive} if $\langle Pv,v\rangle_{\cal H}\geq 0$
for every $v\in V$ for a dense subset $V\subset{\cal H}$.
A matrix $P\in\Bbb C^{n\times n}$ is called positive
if the natural corresponding linear operator $\Bbb C^n\to\Bbb C^n$
is positive, where $\Bbb C^n$ has the standard inner product.

A matrix pseudo-differential symbol $\sigma_A$
is called {\em positive} if
the matrix
$\sigma_A(x,\xi)\in{\Bbb C}^{{\rm dim}(\xi)\times{\rm dim}(\xi)}$
is positive for every $x\in G$ and $\xi\in\Rep(G)$.
In this case we write $\sigma(x,\xi)\geq 0$.
We note that for each $\xi\in\Rep(G)$, the condition
$\sigma_A(x,\xi)\geq 0$ implies $\sigma_A(x,\eta)\geq 0$
for all $\eta\in [\xi]$.
We can also note that this symbol positivity 
does not change if we move from left symbols to right symbols:
\begin{eqnarray*}
  \sigma_A(x,\xi) := \xi^\ast(x)\ \left( A\xi\right)(x)
  = \xi(x)^\ast\ \rho_A(x,\xi)\ \xi(x), \\
  \rho_A(x,\xi) := \left( A\xi\right)(x)\ \xi^\ast(x)
  = \xi(x)\ \sigma_A(x,\xi)\ \xi(x)^\ast;
\end{eqnarray*}
that is, $\sigma_A$ is positive if and only if $\rho_A$ is positive.
Moreover, this positivity concept is natural in the sense that
a left- or right-invariant operator is positive
if and only if its symbol is positive,
as it can be seen from the equalities
\begin{eqnarray}\label{EQ:Ga-multip-pos1}
  \langle a\ast f,f\rangle_{L^2(G)}
  & = & \sum_{[\xi]\in\widehat{G}} {\rm dim}(\xi)
  \ {\rm Tr}\left(\widehat{f}(\xi)\
  \widehat{a}(\xi)\ \widehat{f}(\xi)^\ast
    \right), \\
  \label{EQ:Ga-multip-pos2}
  \langle f\ast a,f\rangle_{L^2(G)}
  & = & \sum_{[\xi]\in\widehat{G}} {\rm dim}(\xi)
  \ {\rm Tr}\left(\widehat{f}(\xi)^\ast\ \widehat{a}(\xi)
  \ \widehat{f}(\xi)  \right),
\end{eqnarray}
which can be shown by a simple calculation which we give in
Proposition~\ref{PROP:Ga-multip-pos}. At the same time,
the operator $M_f$ of multiplication by a smooth function 
$f\in C^\infty(G)$ is
positive if and only if
the function satisfies $f(x)\geq 0$ for every $x\in G$.
The symbol of such multiplication operator is
$\sigma_{M_f}(x,\xi)=f(x) I_{\dim(\xi)}$,
so that this means the positivity of the matrix symbol again.

Now we can formulate the main result of this paper:

\begin{thm}\label{THM:Garding}
Let $A\in\Psi^m(G)$ be such that its full matrix symbol
$\sigma_A$ satisfies
$\sigma_A(x,\xi)\geq 0$ for all $(x,[\xi])\in G\times\Gh$.
Then there exists $C<\infty$ such that
$$
  \Repa (Au,u)_{L^2(G)} \geq -C \|u\|_{H^{(m-1)/2}(G)}^2
$$
for every $u\in C^\infty(G)$.
\end{thm}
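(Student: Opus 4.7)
The plan is to follow the Calder\'on--Nagase strategy adapted to the non-commutative setting: construct a positive operator $P$ whose symbol approximates $\sigma_A$, and then reduce the lower bound on $A$ to a Sobolev estimate on $A-P$. For the construction of $P$, fix a nonnegative $\phi\in C^\infty(G)$ supported in a small neighbourhood of the neutral element $e$, normalised so that $\int_G\phi(x)^2\,\drm x=1$, and for each $[\xi]\in\Gh$ let $\phi_\xi$ be a $\jp\xi^{-1/2}$-rescaled copy of $\phi$ defined via a fixed chart near $e$. For $u\in C^\infty(G)$ and $z\in G$, set $u_{z,\xi}(x):=\phi_\xi(z^{-1}x)u(x)$, and define $P$ via
$$(Pu,v)_{L^2(G)}:=\sum_{[\xi]\in\Gh}\dim(\xi)\int_G\Tr\p{\sigma_A(z,\xi)\,\widehat{u_{z,\xi}}(\xi)\,\widehat{v_{z,\xi}}(\xi)^{*}}\drm z.$$
Factoring the positive matrix $\sigma_A(z,\xi)=W(z,\xi)^{*}W(z,\xi)$ and taking $v=u$ makes the integrand a Hilbert--Schmidt norm, so $(Pu,u)_{L^2}\geq 0$ for every $u\in C^\infty(G)$.

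The core of the argument is then to show that $\sigma_{A-P}$ lies in $S^{m-1}_{1,1/2}(G\times\Gh)$ with enough structure to improve on the naive $(1,1/2)$ Calder\'on--Vaillancourt bound. Expanding $\sigma_A(z,\xi)$ in a group Taylor series around $z=x$ and using the amplitude calculus of type $(\rho,\delta)$ developed in Section~\ref{SEC:Ga-prelim}, the zeroth-order term reproduces $\sigma_A(x,\xi)$ because $\int\phi^2=1$; the first-order term is a $z$-derivative of $\sigma_A$ paired against $\int\phi_\xi(y)\,y_j\,\phi_\xi(y)\,\drm y$ and vanishes by symmetry of the mollifier; the quadratic and higher contributions combine two $z$-derivatives of $\sigma_A$ (cost $\jp\xi^{1/2}$ each, owing to the mollifier width) with second-order $\xi$-differences (gain $\jp\xi^{-1}$ from the $(1,0)$ symbol class), lowering the order by $1$ at the price of landing in the class of type $(1,1/2)$. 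The remainder comes out in a manifestly factored form, allowing one to write $(A-P)u$ as a sum of terms of the shape $\Op(c_j)\,D_j\,u$ and $D_j^{*}\,\Op(c_j')\,u$ with $c_j,c_j'\in S^{m-2}_{1,1/2}$ and $D_j$ first-order difference/differential operators; integration by parts together with the Calder\'on--Vaillancourt bound for $(1,1/2)$ symbols of order zero then gives $|\Repa((A-P)u,u)_{L^2}|\leq C\|u\|_{H^{(m-1)/2}(G)}^{2}$, which combined with $(Pu,u)_{L^2}\geq 0$ yields the theorem.

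The main obstacle is that $\Gh$ is a discrete matrix-valued object on which the classical Friedrichs symmetrisation in the frequency variable is not available; mollification can be done only in the group variable $x$, which inevitably lands the error in the forbidden symbol class $(1,1/2)$. The plan therefore hinges on exploiting simultaneously the symmetry of the mollifier (to kill the would-be first-order obstruction) and the matrix-valued symbolic calculus on $G\times\Gh$ (to track the remaining cancellations and the factored form of the remainder), which is the technical content prepared in Section~\ref{SEC:Ga-prelim}.
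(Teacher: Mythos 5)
Your construction of $P$ is, up to notation, the paper's own: your sesquilinear form is exactly $(\Op(p)u,v)$ for the amplitude $p(x,y,\xi)=\int_G w_\xi(xz^{-1})\,w_\xi(yz^{-1})\,\sigma_A(z,\xi)\,\drm z$, and the positivity argument via $\sigma_A=W^*W$ is the same as \eqref{EQ:Ga-mat-pos}. The first-order cancellation by mollifier symmetry is also the paper's mechanism, though note that on a group this requires more than a ``rescaled copy in a chart'': the paper builds $w_\xi$ to be inversion-invariant \emph{and central} (via the Ad-invariant norm \eqref{EQ:inorm}) and corrects by the Jacobian/Haar density so that $\|w_\xi\|_{L^2}=1$ exactly; centrality is what gives derivatives of $w_\xi$ a definite parity (Proposition \ref{PROP:Ga-even-odd}), which is needed for the second half of the error analysis (the $|\alpha|=1$ term $\int_G w_\xi\,\partial^\alpha w_\xi\,\drm z=0$ in the amplitude-to-symbol expansion of Proposition \ref{PROP:Ga-amps}), a part your sketch does not separate from the Taylor expansion of $\sigma_A$.

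The genuine gap is in your concluding step. You bound $A-P$ by invoking ``the Calder\'on--Vaillancourt bound for $(1,1/2)$ symbols of order zero'' on $G$ (after an asserted, underived factorization of the remainder into $\Op(c_j)D_j$ with $c_j\in\Ccl^{m-2}_{1,1/2}$). No such theorem is available in this framework: the only boundedness tool at hand, Theorem \ref{THM:su2-Sobolev}, requires $\|\partial_x^\beta\sigma\|_{op}\leq C_\beta\jp{\xi}^{m-1}$ with \emph{no} loss in $\beta$, which a general order-$(m-1)$ symbol of type $(1,1/2)$ does not satisfy; and one cannot import the Euclidean Calder\'on--Vaillancourt theorem by localization, since operators of type $(1,1/2)$ on $G$ are not definable in local coordinates. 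The way the paper closes this is precisely the point your plan misses: after the parity cancellations, \emph{every} $x$-derivative of the two error symbols $\sigma_A(x,\xi)-p(x,x,\xi)$ and $p(x,x,\xi)-\sigma_P(x,\xi)$ is $O(\jp{\xi}^{m-1})$ with no $\jp{\xi}^{1/2}$-loss, because the $x$-derivatives always land on the type-$(1,0)$ symbol $\sigma_A$ while the mollifier variable is integrated out (Lemmas \ref{LEM:Ga-difaux-1} and \ref{LEM:Ga-difaux-2}); Theorem \ref{THM:su2-Sobolev} then gives boundedness $H^{(m-1)/2}\to H^{-(m-1)/2}$ directly. As written, your argument rests on an unproved exotic-class boundedness theorem, so either you must prove such a theorem on $G$, or you must redo the error estimates to exhibit the loss-free $x$-derivative bounds and finish as the paper does.
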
 

As a corollary of Theorem~\ref{THM:Garding} we obtain the
following statement on compact Lie groups, analogous to the corresponding
result on $\Rn$, which is often necessary in the proofs of
pseudo-differential inequalities (see e.g.
Theorem 3.1 in \cite{Tay-book}).

\begin{cor}\label{COR:Ga-Taylorthm}
Let $A\in\Psi^1(G)$ be such that its matrix symbol
$\sigma_A$ satisfies
$$
  \left\|\sigma_A(x,\xi)\right\|_{op}\leq C
$$
for all $(x,[\xi])\in G\times\Gh$. Then $A$ is
bounded from $L^2(G)$ to $L^2(G)$.
\end{cor}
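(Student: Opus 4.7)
The plan is to deduce the $L^2\to L^2$ boundedness of $A$ by applying Theorem~\ref{THM:Garding} to a non-negative second-order operator built out of $A^*A$, and then passing from the resulting Sobolev estimate to an $L^2$-bound.

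I would first introduce the operator $P$ defined by prescribing its full matrix symbol
\begin{equation*}
\sigma_P(x,\xi):=2C^2\,I_{\dim(\xi)}-\sigma_A(x,\xi)^*\,\sigma_A(x,\xi).
\end{equation*}
The pointwise hypothesis $\|\sigma_A(x,\xi)\|_{op}\le C$ is equivalent to $\sigma_A^*\sigma_A\le C^2\,I$ in the positive-semidefinite ordering, so $\sigma_P(x,\xi)\ge C^2\,I\ge 0$ for every $(x,[\xi])\in G\times\Gh$. Since $A\in\Psi^1(G)$, the matrix product $\sigma_A^*\sigma_A$ is a symbol of order $2$, hence $P\in\Psi^2(G)$. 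Theorem~\ref{THM:Garding} applied to $P$ then yields
$\Repa(Pu,u)_{L^2(G)}\ge -C_1\|u\|_{H^{1/2}(G)}^2$ for every $u\in C^\infty(G)$.

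Next, using the symbolic composition formula of the global calculus of \cite{RT-book,RT-groups}, the symbol of $A^*A$ expands as $\sigma_{A^*A}\sim\sigma_A^*\sigma_A+r$ with $r$ a matrix-valued symbol of order $1$, so $P=2C^2 I-A^*A+R$ for some $R\in\Psi^1(G)$. Substituting into the sharp G{\aa}rding inequality, using the identity $(A^*Au,u)_{L^2(G)}=\|Au\|_{L^2(G)}^2$ together with the continuity $R:H^{1/2}(G)\to H^{-1/2}(G)$ to estimate $|(Ru,u)_{L^2(G)}|\le C_R\|u\|_{H^{1/2}(G)}^2$, I arrive at
\begin{equation*}
\|Au\|_{L^2(G)}^2\ \le\ 2C^2\,\|u\|_{L^2(G)}^2+C_2\,\|u\|_{H^{1/2}(G)}^2.\qquad(\star)
\end{equation*}

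The principal obstacle is to eliminate the $H^{1/2}$-contribution on the right-hand side of $(\star)$ and conclude $A\colon L^2(G)\to L^2(G)$. The hypothesis is preserved under taking adjoints (the first subleading term in the expansion of $\sigma_{A^*}$ relative to $\sigma_A^*$ is of order $0$ and thus bounded), so $(\star)$ applies equally well to $A^*$, producing by duality the complementary estimate $\|Au\|_{H^{-1/2}(G)}\le K\|u\|_{L^2(G)}$. Closing this half-derivative gap is the delicate step: the natural route is to iterate the sharp G{\aa}rding argument on the rescaled family of operators $\Lambda^{-s}A\,\Lambda^{s-1/2}\in\Psi^{1/2}(G)$ for $s\in[0,1/2]$, with $\Lambda:=(I-\Lap_G)^{1/2}$, for which the relevant auxiliary operator lies in $\Psi^1(G)$ and hence produces only an $L^2$ remainder in the sharp G{\aa}rding bound, combined with a Cotlar--Stein almost-orthogonality argument that exploits the uniform pointwise bound on the matrix symbol $\sigma_A$.
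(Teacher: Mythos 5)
Your first half is fine and essentially reproduces the mechanism of the paper's Corollary \ref{COR:Ga-Kgothm}: the symbol $2C^2 I_{\dim(\xi)}-\sigma_A(x,\xi)^*\sigma_A(x,\xi)$ is in $\mathscr S^2(G)$ by the Leibniz formula, is nonnegative, and combining Theorem \ref{THM:Garding} (with $m=2$) with the composition/adjoint formulas does give your estimate $(\star)$, i.e. $\|Au\|_{L^2}^2\leq 2C^2\|u\|_{L^2}^2+C_2\|u\|_{H^{1/2}}^2$. But this is exactly where the argument stops short of the statement: the half-derivative loss is intrinsic to working at order $2$ (sharp G{\aa}rding only reaches $H^{(m-1)/2}=H^{1/2}$ there), and nothing you propose removes it. The adjoint estimate plus duality gives $A:H^{1/2}\to L^2$ and $A:L^2\to H^{-1/2}$, and interpolation of these yields only $A:H^{1/4}\to H^{-1/4}$, not $L^2\to L^2$. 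The concluding step you call ``the delicate step'' is precisely the content of the corollary, and it is left as a sketch: conjugating by $\Lambda^{\pm}$ produces operators in $\Psi^{1/2}(G)$, but squaring them to feed into Theorem \ref{THM:Garding} again yields an operator of order $1$ and hence an error at the $H^0=L^2$ level \emph{for the conjugated operator}, which when undone reinstates the same half-derivative gap; no mechanism (and no actual Cotlar--Stein decomposition) is given for summing or absorbing these losses. As written, the proposal proves a weaker statement ($A:H^{1/2}\to L^2$) and contains a genuine gap at the crux.

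The paper avoids the problem by never squaring $A$: it applies Theorem \ref{THM:Garding} to the \emph{first-order} operators $C-\erm^{\irm\theta}A\in\Psi^1(G)$, $\theta\in\R$, whose symbols have nonnegative real part by the hypothesis $\|\sigma_A(x,\xi)\|_{op}\leq C$. With $m=1$ the G{\aa}rding remainder is already $\|u\|_{L^2}^2$, so one gets $\Repa\bigl(\erm^{\irm\theta}(Au,u)_{L^2}\bigr)\leq C''\|u\|_{L^2}^2$ uniformly in $\theta$, hence $|(Au,u)_{L^2}|\leq C''\|u\|_{L^2}^2$, and on a complex Hilbert space this numerical-radius bound controls the operator norm by polarization. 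If you want to salvage your quadratic approach instead, you would need the classical Friedrichs/H\"ormander device of taking an exact symbolic square root of $2C^2 I-\sigma_A^*\sigma_A$ (so that the error lands in $\Psi^{1}$ applied at the level $(Ru,u)$ with $R$ of order $2\cdot 1-1$ \emph{below} $0$, i.e. in $H^{-1/2}$), which is a substantially different and more technical argument than the one you outline.
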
 
Here $\|\cdot\|_{op}$ denotes the 
$\ell^2\to\ell^2$
operator norm of the 
linear finite dimensional mapping (matrix multiplication by)
$\sigma_A(x,\xi)$, i.e.
$$\|\sigma_A(x,\xi)\|_{op}=\sup\{\|\sigma_A(x,\xi)v\|_{\ell^2}:
v\in\mathbb C^{\dim(\xi)}, \|v\|_{\ell^2}\leq 1\}.$$

The weights for measuring the orders of symbols are expressed
in terms of the eigenvalues of the bi-invariant Laplacian $\mathcal L_G$.
Matrix elements of every representation class $[\xi]\in\Gh$ 
span an eigenspace of the bi-invariant Laplace--Beltrami
operator $\mathcal L_G$ on $\G$ with the
corresponding eigenvalue $-\lambda_\xi^2$. Based on these 
eigenvalues we define 
$$\langle\xi\rangle = (1+\lambda_\xi^2)^{1/2}.$$ 
For further details and properties of these
constructions we refer to \cite{RT-book}. 
In particular, for the usual Sobolev spaces, we have
$f\in H^s(G)$ if and only if
$\jp{\xi}^s\widehat{f}(\xi)\in\ell^2(\Gh)$.
To fix the norm on $H^s(G)$ for the following statement, we
can then set
$$\|f\|_{H^s(G)}:=\left(\sum_{[\xi]\in\Gh}\dim(\xi)\jp{\xi}^{2s}
\Tr(\widehat{f}(\xi)^\ast\widehat{f}(\xi))\right)^{1/2},$$
and we can write this also as $\|\jp{\xi}^s\widehat{f}(\xi)\|_{\ell^2(\Gh)}.$
Also, we note that by \cite[Lemma~10.9.1]{RT-book}
(or by Theorem \ref{thm:main} below), if $A\in\Psi^m(G)$, then
there is a constant $0<M<\infty$ such that
$\|\sigma_A(x,\xi)\|_{op}\leq M\jp{\xi}^m$ holds for all
$x\in G$ and $[\xi]\in\Gh$.

As another corollary of Theorem~\ref{THM:Garding} we can get
a norm-estimate for pseudo-differential operators
on compact Lie groups:

\begin{cor}\label{COR:Ga-Kgothm}
Let $A\in\Psi^m(G)$ and let 
$$M=\sup_{(x,[\xi])\in G\times\Gh} 
\p{\jp{\xi}^{-m}\|\sigma_A(x,\xi)\|_{op}}.$$ 
Then 
for every $s\in\R$ there
exists a constant $C>0$ such that
$$
  \|Au\|_{H^s(G)}^2\leq M^2\|u\|_{H^{s+m}(G)}^2+
  C\|u\|_{H^{s+m-1/2}(G)}^2
$$
for all $u\in C^\infty(G)$.
\end{cor}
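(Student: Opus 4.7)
The plan is to reduce the desired norm inequality to a single application of Theorem~\ref{THM:Garding}. Let $\Lambda^s:=(1-\Lap_G)^{s/2}$, so that $\Lambda^s$ is bi-invariant and self-adjoint on $L^2(G)$, with full matrix symbol $\jp{\xi}^s I_{\dim(\xi)}$; in particular $\|u\|_{H^s(G)}^2=(\Lambda^{2s}u,u)_{L^2(G)}$. I would introduce the formally self-adjoint operator
$$
P := M^2\Lambda^{2(s+m)} - A^*\Lambda^{2s}A,
$$
which lies in $\Psi^{2(s+m)}(G)$ and satisfies $(Pu,u)_{L^2(G)}=M^2\|u\|_{H^{s+m}(G)}^2-\|Au\|_{H^s(G)}^2$. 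The corollary then reduces to proving the lower bound $(Pu,u)\geq -C\|u\|_{H^{s+m-1/2}(G)}^2$.

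Next I would compute the full matrix symbol of $P$ using the composition and adjoint formulae of \cite{RT-book}. Because the symbol $\jp{\xi}^{2s}I$ of $\Lambda^{2s}$ is scalar and $x$-independent, the asymptotic expansion of $\sigma_{AB}$ with $B=\Lambda^{2s}$ collapses to the single term $\sigma_A(x,\xi)\jp{\xi}^{2s}$; combined with $\sigma_{A^*}=\sigma_A^*$ modulo lower order, this yields
$$
\sigma_P(x,\xi) = \left(M^2\jp{\xi}^{2(s+m)}I_{\dim(\xi)} - \sigma_A(x,\xi)^*\jp{\xi}^{2s}\sigma_A(x,\xi)\right) + r(x,\xi),
$$
with a remainder matrix symbol $r$ of order at most $2(s+m)-1$. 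By the definition of $M$ one has $\sigma_A(x,\xi)^*\sigma_A(x,\xi)\leq M^2\jp{\xi}^{2m}I_{\dim(\xi)}$, and multiplying by the positive scalar $\jp{\xi}^{2s}$ preserves this comparison of Hermitian matrices; thus the parenthesised leading symbol is pointwise non-negative on $G\times\Gh$.

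With this decomposition in hand, I would split $P=P_0+R$, where $P_0\in\Psi^{2(s+m)}(G)$ carries the non-negative matrix symbol above and $R\in\Psi^{2(s+m)-1}(G)$. Applying Theorem~\ref{THM:Garding} to $P_0$ (whose order is $2(s+m)$) produces
$$
\Repa(P_0u,u)_{L^2(G)} \geq -C_1\|u\|_{H^{s+m-1/2}(G)}^2,
$$
while the duality pairing bound for $R$, of order $2(s+m)-1$, gives $|(Ru,u)_{L^2(G)}|\leq C_2\|u\|_{H^{s+m-1/2}(G)}^2$. Summing these and invoking $P=P^*$ (so that $(Pu,u)$ is automatically real) delivers the required lower bound for $(Pu,u)$, which is precisely the assertion of the corollary.

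I expect the only genuinely technical step to be justifying that the remainder $r$ really lies in the order class $2(s+m)-1$. This hinges on a careful application of the global composition and adjoint asymptotic expansions of \cite{RT-book}, together with the fact that the difference operators on $\Gh$ lower the order of a symbol by the length of the corresponding multi-index. Once those expansions are in place, the non-negativity of the leading symbol is immediate from the definition of $M$, and Theorem~\ref{THM:Garding} does the remaining work.
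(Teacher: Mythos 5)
Your proposal is correct and follows essentially the same route as the paper: you introduce exactly the paper's auxiliary non-negative symbol $M^2\jp{\xi}^{2(s+m)}I_{\dim(\xi)}-\sigma_A(x,\xi)^*\jp{\xi}^{2s}\sigma_A(x,\xi)$, apply Theorem~\ref{THM:Garding} to the operator carrying it, and use the composition and adjoint expansions of \cite{RT-book} to show that the discrepancy with $M^2(I-\Lap_G)^{m+s}-A^*(I-\Lap_G)^{s}A$ is of order $2(s+m)-1$, handled by Sobolev boundedness and duality. The only difference is presentational (you start from the operator $P$ and split off $P_0$, while the paper starts from the symbol $B$ and compares $\Op(B)$ with $A^*(I-\Lap_G)^sA-M^2(I-\Lap_G)^{m+s}$), which amounts to the same decomposition.
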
 

\section{Preliminary constructions}\label{SEC:Ga-prelim}

In this section we collect and develop several ideas which will be
used in the proof of Theorem~\ref{THM:Garding}. These
include characterisations of the class $\Psi^m(G)$,
the Leibniz formula,
the amplitude operators on $G$, 
and some properties of even and odd functions.

\subsection{On symbols and operators}

First we collect several facts and definitions required
for our proof.
We now introduce the notation for the symbol classes
on the group $G$ and give a characterisation of classes
$\Psi^m(G)$ in terms of the matrix-valued symbols.
In this, we follow the notation of \cite{RTW}.

We say that $Q_\xi$ is a {\em difference operator} 
of order $k$ if it is given by
$$
   Q_\xi \widehat f(\xi) = \widehat{q_Q f}(\xi),
$$
for a function $q=q_Q\in C^\infty(\G)$ vanishing of order $k$ at
the identity $e\in\G$, i.e., $(P_x q_Q)(e)=0$ for 
all left-invariant differential operators 
$P_x\in\Diff^{k-1}(\G)$ of order $k-1$. 
We denote the set of all difference operators of order $k$ as 
$\diff^k(\Gh)$.

A collection of $\mu\ge n$ first order difference 
operators 
$\triangle_1, \ldots, \triangle_\mu\in\diff^1(\widehat\G)$
is called {\em admissible}, if the corresponding 
functions $q_1, \ldots, q_\mu\in C^\infty(\G)$ satisfy 
$q_j(e)=0$,
$\d q_j(e)\ne 0$ for all $j=1,\ldots,\mu$, and if
$\rank(\d q_1(e),\ldots,\d q_\mu(e))=n$. 
An admissible collection is called {\em strongly admissible} if 
$\bigcap_{j=1}^\mu \{ x\in\G : q_j(x)=0\} = \{e\}$.

For a given admissible selection of difference operators 
on a compact Lie group $G$ we use multi-index notation 
$\triangle_\xi^\alpha = \triangle_1^{\alpha_1}\cdots 
\triangle_\mu^{\alpha_\mu}$ and 
$q^\alpha(x) = q_1(x)^{\alpha_1}\cdots q_\mu(x)^{\alpha_\mu}$ . 
Furthermore, there exist corresponding differential operators 
$\partial_x^{(\alpha)}\in \Diff^{|\alpha|}(\G)$ 
such that Taylor's formula
\begin{equation}\label{EQ:RTW-Taylor-exp}
   f(x) = \sum_{|\alpha|\le N-1} \frac1{\alpha!} \, 
   q^\alpha(x) \, \partial_x^{(\alpha)} f(e) + 
   \mathcal O( {\rm dist}(x,e)^N )
\end{equation} 
holds true for any smooth function $f\in C^\infty(\G)$ 
and with ${\rm dist}(x,e)$ the geodesic distance from $x$ to 
the identity element $e$. An explicit construction of
operators $\partial_x^{(\alpha)}$ in terms of $q^\alpha(x)$
can be found in \cite[Section 10.6]{RT-book}.
In addition to these differential operators
$\partial_x^{(\alpha)}\in \Diff^{|\alpha|}(\G)$ we introduce
operators $\partial_x^{\alpha}$ as follows. Let
$\{\partial_{x_j}\}_{j=1}^n\subset \Diff^{1}(\G)$ be a 
collection of left-invariant first order differential
operators corresponding to some linearly independent family
of the left-invariant vector fields on $G$. We denote
$\partial_x^{\alpha}=\partial_{x_1}^{\alpha_1}\cdots
\partial_{x_n}^{\alpha_n}$. We note that in most estimates
we can freely replace operators $\partial_x^{(\alpha)}$
by $\partial_x^{\alpha}$ and in the other way around since
they can be expressed in terms of each other.
For further details and properties of the introduced
constructions we refer to \cite{RT-book}. 

We now record the characterisation of H\"ormander's classes
as it appeared in \cite{RTW}:

\begin{thm}\label{thm:main}
Let $A$ be a linear continuous operator from 
$C^\infty(\G)$ to $\mathcal D'(\G)$, and let $m\in\Bbb R$.
Then the following statements are equivalent:
\begin{enumerate}
\item[(A)] 
$A\in\Psi^m(\G)$.
\item[(B)] 
For every left-invariant differential operator 
$P_x\in\Diff^k(\G)$ of order $k$ and every difference operator 
$Q_\xi\in\diff^\ell(\widehat\G)$ of order $\ell$ 
there is the symbol estimate
$$
  \| Q_\xi P_x  \sigma_A(x,\xi) \|_{op} \le C_{Q_\xi P_x} 
  \langle\xi\rangle^{m-\ell}.
$$
\item[(C)] 
For an admissible collection 
$\triangle_1,\ldots,\triangle_\mu \in\diff^1(\widehat\G)$  we have
$$
  \| \triangle_\xi^\alpha \partial_x^{\beta} 
  \sigma_A(x,\xi)\|_{op} \le C_{\alpha\beta} 
  \langle\xi\rangle^{m-|\alpha|}
$$
for all multi-indices $\alpha,\beta$.
Moreover, $\singsupp R_A(x,\cdot) \subseteq \{e\}$.
\item[(D)] 
For a strongly admissible collection 
$\triangle_1,\ldots,\triangle_\mu \in\diff^1(\widehat\G)$ we have
$$
  \| \triangle_\xi^\alpha \partial_x^{\beta} 
  \sigma_A(x,\xi)\|_{op} \le C_{\alpha\beta} 
  \langle\xi\rangle^{m-|\alpha|}
$$
for all multi-indices $\alpha,\beta$.
\end{enumerate}
\end{thm}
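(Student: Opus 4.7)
The plan is to prove the four-way equivalence by first cycling through the three matrix-symbol conditions via $(B) \Rightarrow (D) \Rightarrow (C) \Rightarrow (B)$ and then closing with $(A) \Leftrightarrow (B)$. The algebraic engine is the Taylor formula \eqref{EQ:RTW-Taylor-exp} for difference operators; the analytic engine is the global quantization formula \eqref{EQ:Ga-A-op-def}, which converts symbol decay into smoothness of the right-convolution kernel and vice versa.

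The easy directions are $(B)\Rightarrow(D)$ and $(B)\Rightarrow(C)$: every admissible $\triangle_j$ is a difference operator of order one and $\partial_x^\beta\in\Diff^{|\beta|}(G)$, so the symbol estimates are special cases. The singular support condition in (C) follows because applying $\triangle_\xi^\alpha$ to $\sigma_A$ corresponds under Peter--Weyl to multiplication of $R_A(x,\cdot)$ by $q^\alpha$; the bound $\langle\xi\rangle^{m-|\alpha|}$ forces $q^\alpha(\cdot) R_A(x,\cdot)$ to become smooth as $|\alpha|$ grows, and at any point $y\ne e$ strong admissibility provides some $q_j$ nonvanishing at $y$, which gives local smoothness there. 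The same observation delivers $(D)\Rightarrow(C)$ for free. For $(C)\Rightarrow(B)$, given any $Q_\xi$ with symbol $q_Q$ vanishing to order $\ell$ at $e$, \eqref{EQ:RTW-Taylor-exp} writes
$$q_Q(x) = \sum_{|\alpha|=\ell} c_\alpha(x)\, q^\alpha(x) + r_N(x)$$
with smooth $c_\alpha$ and $r_N$ vanishing to order $N$. The main term yields a bound controlled by the estimates on $\triangle_\xi^\alpha \sigma_A$ from (C), while the remainder is a difference operator whose kernel vanishes to arbitrary order at $e$ and hence, combined with the singular support condition, contributes a symbol of arbitrarily low order. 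Expanding $P_x$ as a polynomial in $\{\partial_{x_j}\}$ completes the reduction and recovers (B).

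The principal obstacle is the bridge $(A)\Leftrightarrow(B)$. For $(A)\Rightarrow(B)$ I would compute $\sigma_A(x,\xi)=\xi(x)^\ast(A\xi)(x)$ in a chart around $x$, apply the Hörmander estimates to the local amplitude of $A$, and invoke the eigenvalue asymptotics $\|\sigma_{\Lap_G}(x,\xi)\|_{op}\sim\langle\xi\rangle^2$ to translate derivatives of matrix coefficients into factors of $\langle\xi\rangle$, yielding the estimates for arbitrary $Q_\xi$ and $P_x$. The reverse direction $(B)\Rightarrow(A)$ is the delicate step: one reconstructs $R_A(x,\cdot)$ via Peter--Weyl inversion, cuts with a partition of unity subordinate to coordinate charts, and verifies the classical Hörmander estimates on the resulting local amplitude by summing over $[\xi]\in\widehat G$. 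The technical heart here is a uniform control on oscillatory sums produced by evaluating $\xi$ at $\exp v$ for $v$ near $0\in\mathfrak g$, which is precisely what the $(\rho,\delta)$-type amplitude expansions announced in Section \ref{SEC:Ga-prelim} are designed to supply.
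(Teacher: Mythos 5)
First, a point of orientation: the paper does not prove Theorem \ref{thm:main} at all --- it is recorded verbatim from \cite{RTW} --- so there is no internal proof to compare against; your proposal has to stand on its own. The cycle $(B)\Rightarrow(D)\Rightarrow(C)\Rightarrow(B)$ is plausible in outline, although even there the step where you expand $q_Q$ by \eqref{EQ:RTW-Taylor-exp}, absorb the smooth coefficients $c_\alpha$, and discard the Taylor remainder using the singular support condition already requires quantitative kernel estimates for $R_A(x,\cdot)$ near $e$ (growth of the singularity, finite distributional order) that you have not established. The genuine gap, however, is the bridge $(A)\Leftrightarrow(B)$, which is the actual content of the theorem. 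For $(B)\Rightarrow(A)$ you propose to reconstruct $R_A(x,\cdot)$ by Peter--Weyl inversion, localise, and ``verify the classical H\"ormander estimates on the resulting local amplitude by summing over $[\xi]\in\Gh$'', with the uniform control of the resulting oscillatory sums allegedly supplied by the amplitude machinery of Section \ref{SEC:Ga-prelim}. That tool cannot supply it: Lemma \ref{LEM:frozensymbols} and Proposition \ref{PROP:Ga-amps} convert amplitude estimates into \emph{global} matrix-symbol estimates of type (D); they never yield membership in the local class $\Psi^m(\Rn)$, so invoking them here is circular --- the passage from global symbol estimates to local H\"ormander symbols is exactly what has to be proved. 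The hard analytic point is uniform control, over $[\xi]\in\Gh$, of derivatives of the matrix coefficients $\xi(\exp v)$ together with the growth of $\dim(\xi)$, and nothing in your sketch addresses it; likewise your $(A)\Rightarrow(B)$ step (``apply the H\"ormander estimates to the local amplitude and translate derivatives into factors of $\jp{\xi}$'') is an assertion, not an argument, since $\xi(x)^*(A\xi)(x)$ is not an exponential in any chart.

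For comparison, the proof in \cite{RTW} avoids the summation over $\Gh$ altogether in the delicate direction: from the estimates in (D) one first obtains Sobolev boundedness $H^s\to H^{s-m}$ (in the spirit of Theorem \ref{THM:su2-Sobolev}); one then checks, via the finite Leibniz formula and the symbolic calculus for the classes defined by (D), that iterated commutators of $A$ with vector fields and with multiplication operators again satisfy symbol estimates of the appropriate orders, hence are Sobolev bounded; finally one invokes the commutator (Beals-type) characterisation of pseudo-differential operators on a closed manifold to conclude $A\in\Psi^m(G)$. The implication $(A)\Rightarrow(B)$ is handled there through estimates on the Schwartz kernel near the diagonal rather than through a chart-by-chart computation of the symbol. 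To make your proposal into a proof you would need either to reproduce such a commutator scheme or to genuinely carry out the uniform analysis of the matrix coefficients; as written, the central implication is named but not proved.
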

The set of symbols $\sigma_A$ satisfying either of
conditions $(B)$--$(D)$ will be denoted by
$\mathscr S^{m}_{1,0}(G)=\mathscr S^{m}(G)$. 
We note that if conditions (C) or (D) hold for one
admissible (strongly admissible, resp.) collection
of first order difference operators, they automatically
hold for all admissible (strongly admissible, resp.)
collections.

For the
purposes of this paper, we will also need larger classes
of symbols which we now introduce. We will say that a
matrix-valued symbol $\sigma_A(x,\xi)$ belongs to
$\Ccl^m_{\rho,\delta}(G)$ if it is smooth in $x$ and if
for a strongly admissible collection 
$\triangle_1,\ldots,\triangle_\mu \in\diff^1(\widehat\G)$  we have
\begin{equation}\label{EQ:Ga-rhodel-classes}
  \| \triangle_\xi^\alpha \partial_x^{\beta} 
  \sigma_A(x,\xi)\|_{op} \le C_{\alpha\beta} 
  \langle\xi\rangle^{m-\rho|\alpha|+\delta|\beta|}
\end{equation} 
for all multi-indices $\alpha,\beta$,
uniformly in $x\in G$ and $\xi\in\Rep(G)$.
\begin{rem}\label{REM:Ga-rhodel-classes}
As it was pointed out in \cite{RTW},
in Theorem \ref{thm:main}
we still have
the equivalence of conditions $(B)$, $(C)$, $(D)$, also
if we replace symbolic inequalities in 
Theorem \ref{thm:main} by inequalities of the form
\eqref{EQ:Ga-rhodel-classes}.
Also in this setting, if conditions (C) or (D) hold for one
admissible (strongly admissible, resp.) collection
of first order difference operators, they automatically
hold for all admissible (strongly admissible, resp.)
collections.
\end{rem} 

We will also write $a\in \Ccl^m_{\rho,\delta \#}(G)$ if
for every multi-index $\beta$ and for every $x_0\in G$ we have
$\partial_x^\beta a(x_0,\cdot)\in \Ccl^{m+\delta|\beta|}_{\rho\#}(G)$,
where for a multiplier $b=b(\xi)$ we write
$b\in \Ccl^{\mu}_{\rho\#}(G)$ if for every
multi-index $\alpha$ there is a constant $C_\alpha$ such that
$$
  \n{\Delta_\xi^\alpha b(\xi)}_{op} \leq
  C_\alpha \jp{\xi}^{\mu-\rho|\alpha|}
$$
holds for all $[\xi]\in\Gh$.
We record the following straightforward 
lemma that follows from the
smoothness of symbols in $x$ and the compactness of $G$:

\begin{lem}\label{LEM:frozensymbols}
We have $a\in \Ccl^m_{\rho,\delta}(G)$ if and only if
$a\in \Ccl^m_{\rho,\delta \#}(G)$.
\end{lem}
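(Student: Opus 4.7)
The implication $\Ccl^m_{\rho,\delta}(G)\subseteq\Ccl^m_{\rho,\delta\#}(G)$ is immediate: the uniform estimate $\|\triangle_\xi^\alpha \partial_x^\beta a(x,\xi)\|_{op}\le C_{\alpha,\beta}\langle\xi\rangle^{m-\rho|\alpha|+\delta|\beta|}$ defining $\Ccl^m_{\rho,\delta}(G)$, when restricted to any fixed $x_0\in G$, gives precisely the estimate required for $\partial_x^\beta a(x_0,\cdot)\in\Ccl^{m+\delta|\beta|}_{\rho\#}(G)$, with the constant $C_{\alpha,\beta}$ independent of $x_0$.

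For the converse, assume $a\in\Ccl^m_{\rho,\delta\#}(G)$ and fix multi-indices $\alpha$ and $\beta$. Observe that $\partial_x^\beta a$ again lies in $\Ccl^{m+\delta|\beta|}_{\rho,\delta\#}(G)$, since by hypothesis $\partial_x^{\beta+\gamma}a(x_0,\cdot)\in\Ccl^{m+\delta|\beta+\gamma|}_{\rho\#}(G)$ for every $\gamma$ and every $x_0$; this reduces the task to the case $\beta=0$, i.e., producing $C_\alpha$ with $\|\triangle_\xi^\alpha a(x,\xi)\|_{op}\le C_\alpha\langle\xi\rangle^{m-\rho|\alpha|}$ uniformly in $(x,[\xi])\in G\times\Gh$. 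My plan is a finite-cover / Taylor-expansion argument exploiting the smoothness of $a$ in $x$ and the compactness of $G$. Cover $G$ by finitely many open sets $U_1,\dots,U_K$ with base points $y_k\in U_k$ such that every $x\in U_k$ can be written as $x=y_k z$ for $z$ in a fixed small neighbourhood $V$ of the identity. On each chart, Taylor's formula \eqref{EQ:RTW-Taylor-exp} applied to $z\mapsto a(y_k z,\xi)$ expresses
\[
a(y_k z,\xi)=\sum_{|\gamma|\le N-1}\frac{q^\gamma(z)}{\gamma!}(\partial_x^{(\gamma)}a)(y_k,\xi)+R_{N,k,\xi}(z),
\]
with a remainder $R_{N,k,\xi}(z)$ involving $\partial_x^{(\gamma)}a$ for $|\gamma|=N$ along the path from $y_k$ to $y_k z$. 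Applying $\triangle_\xi^\alpha$ and invoking the pointwise hypothesis at the finitely many base points $y_k$ (and in a slightly larger neighbourhood of each $y_k$ for the remainder) yields $\|\triangle_\xi^\alpha a(x,\xi)\|_{op}\le C_{\alpha,k}\langle\xi\rangle^{m-\rho|\alpha|}$ for $x\in U_k$, so taking $C_\alpha=\max_k C_{\alpha,k}$ closes the argument.

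The main obstacle is controlling the Taylor remainder $R_{N,k,\xi}$ uniformly in $\xi$: the factors $q^\gamma(z)$ are of order $d(z,e)^{|\gamma|}$, whereas the hypothesis on $\partial_x^{(\gamma)}a$ produces $\langle\xi\rangle^{\delta|\gamma|}$ weights on $\|\triangle_\xi^\alpha\partial_x^{(\gamma)}a(y_k,\xi)\|_{op}$, so for $\delta>0$ some care is required. This is precisely where the full strength of the hypothesis $\partial_x^\beta a(x_0,\cdot)\in\Ccl^{m+\delta|\beta|}_{\rho\#}(G)$ for every $\beta$ enters: by choosing the Taylor order $N$ large enough relative to the radius of $V$ and to the parameters $\rho,\delta$, and by refining the cover if necessary, the smallness of $q^\gamma(z)$ absorbs the symbolic growth in $\xi$ and the remainder satisfies the same bound as the main terms. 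Since the cover is finite, this gives the uniform estimate $\|\triangle_\xi^\alpha a(x,\xi)\|_{op}\le C_\alpha\langle\xi\rangle^{m-\rho|\alpha|}$ on all of $G\times\Gh$, hence $a\in\Ccl^m_{\rho,\delta}(G)$.
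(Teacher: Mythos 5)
Your easy inclusion and the reduction to $\beta=0$ are fine; the problem is the converse, and precisely at the point you flag yourself, the Taylor remainder. Two things go wrong there. First, the remainder in \eqref{EQ:RTW-Taylor-exp} applied to $z\mapsto a(y_kz,\xi)$ is controlled by $\triangle_\xi^\alpha\partial_x^{(\gamma)}a(y,\xi)$ with $|\gamma|=N$ at points $y$ filling a whole neighbourhood of $y_k$, not just at the finitely many base points. The hypothesis $a\in\Ccl^m_{\rho,\delta\#}(G)$ only gives, at each individual point, an estimate whose constant depends on that point, and nothing says these constants are locally bounded; invoking the hypothesis ``in a slightly larger neighbourhood of each $y_k$'' is exactly assuming locally uniform constants, which is the statement you are trying to prove, so the argument is circular. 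Second, even granting such uniform control, the absorption you propose cannot close for $\delta>0$: on a chart of fixed radius $r$ you only have $|q^\gamma(z)|\lesssim r^{N}$, while the hypothesis contributes a factor $\jp{\xi}^{\delta N}$, and $r^{N}\jp{\xi}^{\delta N}$ is unbounded in $\xi$ for every fixed $N$ and $r$. To trade smallness of $q^\gamma$ against $\jp{\xi}^{\delta N}$ you would need charts of radius comparable to $\jp{\xi}^{-\delta}$, i.e.\ a $\xi$-dependent cover, which a fixed finite cover cannot provide; note the paper applies the lemma with $\delta=1/2$, so this is not a marginal case.

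The gap is not a removable technicality: from the literal pointwise reading of $\Ccl^m_{\rho,\delta\#}(G)$ (constants allowed to depend on $x_0$) the uniform estimates simply do not follow. For instance, take smooth bumps $\chi_\xi$ supported in $\{x:\mathrm{dist}(x,x_\xi)\le\jp{\xi}^{-\delta}\}$ with centres satisfying $\mathrm{dist}(x_\xi,e)=\jp{\xi}^{-\delta/2}$, and set $a(x,\xi)=g(\xi)\,\chi_\xi(x)\,I_{\dim(\xi)}$. For every fixed $x_0$ the frozen symbol $a(x_0,\cdot)$ vanishes for all but finitely many $[\xi]$ (for $x_0=e$ because $\jp{\xi}^{-\delta/2}>\jp{\xi}^{-\delta}$ eventually, otherwise because the supports shrink to $e$), and a multiplier vanishing outside finitely many $[\xi]$ has rapidly decaying differences, so all frozen estimates hold with $x_0$-dependent constants no matter how fast $g$ grows; yet membership in $\Ccl^m_{\rho,\delta}(G)$ would force $|g(\xi)|\lesssim\jp{\xi}^{m}$. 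So any correct proof of the nontrivial direction must use, or first establish, local uniformity in $x_0$ of the constants — this is what the paper's appeal to smoothness in $x$ and compactness of $G$ implicitly supplies (the paper records the lemma without a written proof) — and your finite-cover-plus-Taylor scheme as written neither assumes nor produces it.
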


Another tool which will be required for the proof is the
finite version of the Leibniz formula which appeared in
\cite{RTW}.
Given a continuous unitary matrix representation
$\xi^0=\begin{bmatrix} \xi^0_{ij} \end{bmatrix}_{1\leq i,j\leq \ell}:
G\to\Bbb C^{\ell\times \ell}$,  
$\ell=\dim(\xi^0)$,
let $q(x)=\xi^0(x)-I$ (i.e. $q_{ij}=\xi^0_{ij}-\delta_{ij}$
with Kronecker's deltas $\delta_{ij}$), and define
$$
  \D_{ij}\widehat{f}(\xi) := \widehat{q_{ij} f}(\xi).
$$
In the previous notation, we could also 
write $\D_{ij}=\Delta_{q_{ij}}$.
For a multi-index $\gamma\in \N_0^{\ell^2}$, we write
$|\gamma|=\sum_{i,j=1}^\ell|\gamma_{ij}|$, and for higher order
difference operators we write
$\D^\gamma=\D_{11}^{\gamma_{11}}\D_{12}^{\gamma_{12}}\cdots
\D_{\ell,\ell-1}^{\gamma_{\ell,\ell-1}}
\D_{\ell\ell}^{\gamma_{\ell\ell}}$.
In contrast to the asymptotic 
Leibniz rule \cite[Thm.~10.7.12]{RT-book} for arbitrary
difference operators,
operators $\D$ satisfy the finite Leibniz formula:
\begin{prop}\label{prop:Ga-Leibniz}
For all $\gamma\in\N_0^{\ell^2}$ we have
$$
\D^\gamma(ab)=
\sum_{|\varepsilon|,|\delta|\leq |\gamma|\leq |\varepsilon|+|\delta|}
C_{\gamma\varepsilon\delta}\ (\D^\varepsilon a)\ 
(\D^\delta b),
$$
with the summation taken over all 
$\varepsilon,\delta\in\N_0^{\ell^2}$ satisfying
$|\varepsilon|,|\delta|\leq 
|\gamma|\leq |\varepsilon|+|\delta|$. 
In particular, for
$|\gamma|=1$, we have
\begin{eqnarray}\label{EQ:G-leinbiz-1}
\D_{ij}(ab) =
  \left(\D_{ij} a\right) b
  + a \left(\D_{ij} b\right)
  + \sum_{k=1}^\ell
  \left( \D_{ik} a \right)
  \left( \D_{kj} b \right).
\end{eqnarray} 
\end{prop}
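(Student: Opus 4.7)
The plan is to reduce the entire proposition to a single coproduct-type identity for the functions $q_{ij}$. Since $\xi^0$ is a representation, we have $\xi^0(xy)=\xi^0(x)\xi^0(y)$, i.e.\ $\xi^0_{ij}(xy)=\sum_k \xi^0_{ik}(x)\xi^0_{kj}(y)$. Substituting $\xi^0_{ab}=q_{ab}+\delta_{ab}$, expanding, and cancelling the constant part gives
$$
q_{ij}(xy) \;=\; q_{ij}(x) + q_{ij}(y) + \sum_{k=1}^{\ell} q_{ik}(x)\,q_{kj}(y). \qquad (\star)
$$
This is the only place where the representation property of $\xi^0$ is actually used, and it already explains the presence of the ``non-standard'' cross term on the right-hand side of \eqref{EQ:G-leinbiz-1}.

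For the first-order formula \eqref{EQ:G-leinbiz-1}, I would first take $a=\widehat{f}$, $b=\widehat{g}$ with $f,g\in C^\infty(G)$; the pointwise matrix product $a(\xi)b(\xi)$ is the Fourier transform of a convolution $h$ on $G$ whose integrand has the form $F(u)G(v)$ with $z=uv$ (the precise form of $F,G$ depending on the chosen convolution convention). Since $\D_{ij}$ on the Fourier side corresponds to multiplication by $q_{ij}$ on the group side, $\D_{ij}(ab)=\widehat{q_{ij}\,h}$, and inserting $(\star)$ for $q_{ij}(z)=q_{ij}(uv)$ inside the convolution integral splits $q_{ij}h$ into three pieces of the form $(q_{ij}F)*G$, $F*(q_{ij}G)$, and $\sum_k (q_{ik}F)*(q_{kj}G)$. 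Taking Fourier transforms term by term and applying the defining relations $\D_{pq}a=\widehat{q_{pq}f}$ and $\D_{pq}b=\widehat{q_{pq}g}$ produces exactly the three summands on the right-hand side of \eqref{EQ:G-leinbiz-1}. Since both sides are continuous bilinear expressions in $a,b$, this extends to general symbols; alternatively, one repeats the calculation directly with the right-convolution kernels $R_A, R_B$ of the operators whose symbols are $a$ and $b$.

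The general case $\D^\gamma(ab)$ follows by induction on $|\gamma|$: each additional application of some $\D_{ij}$ to a previously obtained term $(\D^\varepsilon a)(\D^\delta b)$ produces, via \eqref{EQ:G-leinbiz-1}, three contributions in which either one of $|\varepsilon|,|\delta|$ grows by $1$ (the two diagonal terms) or both grow by $1$ (the cross term). Iterating this $|\gamma|$ times, the reachable pairs $(\varepsilon,\delta)$ are precisely those satisfying $|\varepsilon|,|\delta|\le|\gamma|\le|\varepsilon|+|\delta|$, and the non-negative integer constants $C_{\gamma\varepsilon\delta}$ simply record the multiplicities. The only delicate point of the whole argument is the bookkeeping of indices at the first step: one must pin down the convolution convention so that, after Fourier transforming the three pieces and matching with $(\star)$, the ordered matrix-multiplication pattern $(\D_{ik}a)(\D_{kj}b)$ demanded by \eqref{EQ:G-leinbiz-1} emerges (and not a transposed variant); once $(\star)$ is paired with the correct convention, the remainder of the proof is a routine Fourier-analytic manipulation.
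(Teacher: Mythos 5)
The paper itself gives no proof of this proposition (it is imported from \cite{RTW}), and your argument is the standard one: the identity $q_{ij}(uv)=q_{ij}(u)+q_{ij}(v)+\sum_{k}q_{ik}(u)q_{kj}(v)$ is indeed the whole content; the passage from $\widehat f,\widehat g$ to general symbols is best done exactly as in your alternative, i.e. directly on the (distributional) right-convolution kernels, since the bilinearity/continuity argument is vague about the topology while the kernel computation works verbatim for distributions; the operators $\D_{ij}$ commute (they are multiplications by $q_{ij}$ on the kernel side), so the reduction of $\D^\gamma$ to iterated first-order steps is legitimate; and the induction yielding the constraint $|\varepsilon|,|\delta|\leq|\gamma|\leq|\varepsilon|+|\delta|$ is unproblematic, as only the inclusion of the index set is needed.

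The one step you postpone, however, is exactly where the substance lies, and as written your outline does not give \eqref{EQ:G-leinbiz-1} literally. With the conventions of this paper, $\widehat f(\xi)=\int_G f(x)\xi(x)^*\,\mathrm{d}x$, hence $\widehat{u\ast v}(\xi)=\widehat v(\xi)\,\widehat u(\xi)$, so $ab=\widehat f\,\widehat g$ is the transform of $h=g\ast f$; writing $x=y\,(y^{-1}x)$ and inserting your identity into $q_{ij}(x)h(x)$, the cross term is $\sum_k (q_{ik}g)\ast(q_{kj}f)$, whose Fourier transform is $\sum_k(\D_{kj}a)(\D_{ik}b)$ --- the transposed pairing, not $\sum_k(\D_{ik}a)(\D_{kj}b)$. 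To land on the ordered form as stated you must either define the first-order differences through $q(x)=\xi^0(x)^*-I$, i.e. $q_{ij}(x)=\overline{\xi^0_{ji}(x)}-\delta_{ij}$ (an equally strongly admissible family coming from the conjugate representation, for which $q(uv)=q(u)+q(v)+q(v)q(u)$ reverses the pairing so that the stated cross term drops out), or use the Fourier convention $\int_G f(x)\xi(x)\,\mathrm{d}x$, for which $\widehat{u\ast v}=\widehat u\,\widehat v$. You should commit to one of these and carry the indices through; it is worth adding that for every application of the proposition in this paper only operator norms of the factors are used, so either variant of the cross term would serve equally well, but the exact statement you are asked to prove requires this bookkeeping to be done, not deferred.
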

Difference operators $\D$ lead to strongly admissible 
collections (see \cite{RTW}):
\begin{lem}\label{LEM:RTW-strong-adms}
The family of difference operators associated to the family of
functions
$\{q_{ij}=\xi_{ij}-\delta_{ij}\}_{[\xi]\in\Gh,\ 1\leq i,j\leq 
\dim(\xi)}$
is strongly admissible. Moreover, this family has a finite
subfamily 
associated to finitely many representations which is still 
strongly admissible.
\end{lem}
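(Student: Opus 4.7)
The plan is to verify the three defining properties of admissibility for the full family and then extract a finite strongly admissible subfamily by a compactness argument. That each $q_{ij}(e) = \xi_{ij}(e) - \delta_{ij} = 0$ is immediate from $\xi(e)=I_{\dim(\xi)}$.

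For the rank condition, I would argue by contradiction: if some nonzero $X\in\mathfrak g$ satisfied $\d\xi_{ij}(e)(X)=0$ for every $[\xi]\in\Gh$ and all matrix entries, then, interpreting $X$ as a left-invariant vector field, $Xf(e)=0$ for every $f$ in the linear span of all matrix coefficients. By the Peter--Weyl theorem this span is dense in $C^\infty(G)$, forcing $X=0$. Hence $\{\d q_{ij}(e)\}$ spans $\mathfrak g^\ast$, which yields both $\d q_{ij}(e)\ne 0$ for some indices per representation and the rank $n$ condition. Strong admissibility then reduces to observing that
$$\bigcap_{\xi,i,j}\{x\in G: q_{ij}(x)=0\} \;=\; \{x\in G: \xi(x)=I_{\dim(\xi)} \text{ for all } [\xi]\in\Gh\},$$
and this set is $\{e\}$ because irreducible unitary representations separate points of a compact group (again by Peter--Weyl).

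For the finite subfamily I would proceed in two stages, using compactness of $G$. Since the differentials at $e$ of all matrix entries span $\mathfrak g^\ast$, I can extract finitely many representations $\xi^{(1)},\ldots,\xi^{(r)}$ whose entries' differentials at $e$ already span. Stacking the resulting $q_{ij}$'s into a single smooth map $q:G\to\C^N$, the differential $\d q(e)$ is surjective, so $q$ is a submersion at $e$; consequently $q^{-1}(0)$ is a smooth submanifold of codimension $n=\dim G$ near $e$, hence reduces to the singleton $\{e\}$ on some open neighbourhood $U$ of $e$. For every $x$ in the compact set $G\setminus U$ there exists, by point separation, a representation $\xi^{(x)}$ and indices $i_x, j_x$ with $q^{(x)}_{i_x j_x}(x)\ne 0$, and the same inequality holds on an open neighbourhood $W_x$ of $x$ by continuity. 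Finitely many such $W_{x_1},\ldots,W_{x_N}$ cover the compact complement $G\setminus U$, and adjoining the corresponding representations to $\xi^{(1)},\ldots,\xi^{(r)}$ produces the desired finite strongly admissible family.

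The main point to watch is the local half of the finite extraction: once enough representations are included for the differentials to span $\mathfrak g^\ast$ at $e$, the submersion argument forces the common zero locus to already be isolated at $e$, and without this observation the two-stage compactness argument cannot close. The remaining steps amount to combining Peter--Weyl density with continuity and the compactness of $G$.
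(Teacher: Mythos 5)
Your argument is correct in outline, but it takes a genuinely different route from the one the paper relies on: the lemma is not proved here at all, it is quoted from \cite{RTW}, where the argument invokes the fact that a compact Lie group admits a faithful finite-dimensional unitary representation (equivalently, $G$ embeds as a closed subgroup of some ${\rm GL}(N,\R)$, a fact the paper also uses in the construction of $w_\xi$). Decomposing such a representation $\pi$ into irreducible components $\xi^{(1)},\dots,\xi^{(r)}$, the common zero set of the associated $q_{ij}$ is precisely $\ker\pi=\{e\}$, and the rank condition is precisely injectivity of $d\pi$ at $e$; both conditions thus come at once from faithfulness. Your two-stage compactness argument --- first enough representations so that the differentials of the $q_{ij}$ at $e$ have trivial common kernel, which isolates the common zero at $e$, then finitely many more representations to exclude zeros on the compact set $G\setminus U$ using point separation and a finite subcover --- in effect re-proves, by hand, the existence of a finite jointly faithful family of irreducible representations from Peter--Weyl separation alone. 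What you gain is self-containedness; what the faithful-representation argument gains is brevity.

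Two steps should be tightened. First, with $q:G\to\C^N$ the stacked map, $\d q(e)$ cannot be surjective (in general $2N>n$), so $q$ is not a submersion at $e$ and $q^{-1}(0)$ is not a codimension-$n$ submanifold for that reason; what your spanning hypothesis actually gives is that $\d q(e)$ is \emph{injective}, i.e.\ $q$ is an immersion at $e$, hence locally injective (apply the inverse function theorem to $n$ real-linearly independent functions among ${\rm Re}\,q_{ij},{\rm Im}\,q_{ij}$), and since $q(e)=0$ this yields the isolated common zero at $e$ --- the conclusion you want, by the correct mechanism. Second, in the rank step, uniform density of the span of matrix coefficients is not by itself enough to conclude $Xf(e)=0$ for all smooth $f$; you need density in (at least) the $C^1$ topology, which holds since Fourier series of smooth functions converge in $C^\infty(G)$, or you can avoid the issue entirely: if $d\xi(X)=0$ for every irreducible $\xi$, then $\xi(\exp tX)=I$ for all $\xi$ and $t$, so $\exp tX=e$ by point separation and hence $X=0$. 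Finally, your parenthetical claim that every representation has some entry with $\d q_{ij}(e)\neq 0$ fails for the trivial representation (and for certain entries in a weight basis); this is a convention issue already present in the statement of the lemma, and it affects neither the rank nor the zero-set conditions, but it should not be asserted as a consequence of your argument.
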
 

We now give a simple proof of the equalities \eqref{EQ:Ga-multip-pos1}
and \eqref{EQ:Ga-multip-pos2}.

\begin{prop}\label{PROP:Ga-multip-pos}
We have 
$$
  \langle a\ast f,f\rangle_{L^2(G)}
  = \sum_{[\xi]\in\widehat{G}} {\rm dim}(\xi)
  \ {\rm Tr}\left(\widehat{f}(\xi) \
  \widehat{a}(\xi)\ \widehat{f}(\xi)^\ast
   \right),
$$
$$
  \langle f\ast a,f\rangle_{L^2(G)}
  = \sum_{[\xi]\in\widehat{G}} {\rm dim}(\xi)
  \ {\rm Tr}\left(\widehat{f}(\xi)^\ast\ \widehat{a}(\xi)
  \ \widehat{f}(\xi)  \right).
$$
\end{prop}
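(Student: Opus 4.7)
The plan is to reduce both identities to the Plancherel formula by first computing the group Fourier transforms of the convolutions $a\ast f$ and $f\ast a$, and then applying the polarised Parseval identity. The non-commutativity of $G$ means that I must keep careful track of the order of matrix factors produced by the representation, since $\xi(yz)^{\ast}=\xi(z)^{\ast}\xi(y)^{\ast}$.

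First I would recall the polarised form of Parseval, namely
\[
  \langle g,f\rangle_{L^{2}(G)}
  = \sum_{[\xi]\in\Gh}\dim(\xi)\,
    \Tr\bigl(\widehat{g}(\xi)\,\widehat{f}(\xi)^{\ast}\bigr),
\]
which follows from the stated Parseval identity by polarisation. The real work is thus to identify $\widehat{a\ast f}(\xi)$ and $\widehat{f\ast a}(\xi)$. For $a\ast f(x)=\int_{G}a(y)\,f(y^{-1}x)\,\drm y$, I substitute $z=y^{-1}x$ in the double integral defining $\widehat{a\ast f}(\xi)$ and use $\xi(yz)^{\ast}=\xi(z)^{\ast}\xi(y)^{\ast}$ to obtain
\[
  \widehat{a\ast f}(\xi)
  =\int_{G}\!\!\int_{G} a(y)\,f(z)\,\xi(z)^{\ast}\xi(y)^{\ast}\drm z\,\drm y
  =\widehat{f}(\xi)\,\widehat{a}(\xi).
\]
Feeding this into the polarised Parseval identity gives the first equality directly.

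For the second identity I perform the same substitution on $f\ast a(x)=\int_{G}f(y)\,a(y^{-1}x)\,\drm y$; pulling the $y$-independent matrix $\widehat{a}(\xi)$ out of the inner integral yields
\[
  \widehat{f\ast a}(\xi)=\widehat{a}(\xi)\,\widehat{f}(\xi).
\]
Applying the polarised Parseval identity and then using the cyclic property of the trace, $\Tr\bigl(\widehat{a}(\xi)\widehat{f}(\xi)\widehat{f}(\xi)^{\ast}\bigr)=\Tr\bigl(\widehat{f}(\xi)^{\ast}\widehat{a}(\xi)\widehat{f}(\xi)\bigr)$, produces the second identity.

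No genuine obstacle arises; the only point requiring care is the order of the matrix factors, both in the Fourier transform of a convolution on a non-abelian group and in the cyclic rearrangement inside the trace. All manipulations are justified for $a,f\in C^{\infty}(G)$ by absolute convergence of the Peter--Weyl series, which is automatic from the Parseval identity already quoted in the paper.
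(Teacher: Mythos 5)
Your proof is correct: with the paper's convention $\widehat{f}(\xi)=\int_G f(x)\,\xi(x)^*\,{\rm d}x$ one indeed has $\widehat{a\ast f}(\xi)=\widehat{f}(\xi)\,\widehat{a}(\xi)$ and $\widehat{f\ast a}(\xi)=\widehat{a}(\xi)\,\widehat{f}(\xi)$, and your polarised Parseval identity $\langle g,f\rangle_{L^2(G)}=\sum_{[\xi]}\dim(\xi)\Tr\bigl(\widehat{g}(\xi)\,\widehat{f}(\xi)^*\bigr)$ is consistent with the inner product being linear in the first argument, so both identities follow, the second after the cyclic rearrangement of the trace. The route is organised a little differently from the paper's: there the authors do not isolate the convolution theorem or polarisation, but instead insert the Peter--Weyl expansions of $f\ast a$ and $f$ into the $L^2$ pairing, write everything in matrix coefficients, and integrate term by term using the orthogonality of matrix elements of inequivalent irreducible representations. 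The underlying ingredient is the same (Peter--Weyl orthogonality), but your packaging is more modular: once the order of the factors in $\widehat{a\ast f}$ and $\widehat{f\ast a}$ is pinned down, no index bookkeeping is needed, whereas the paper's direct computation has the advantage of verifying the convention-dependent order of the matrix factors in the course of the calculation itself rather than relying on separately quoted convolution and Plancherel formulas.
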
 

\begin{proof}
The second claimed equality follows from the following calculation:
\begin{eqnarray*}
  && \langle f\ast a,f\rangle_{L^2(G)} 
   =  \int_G (f\ast a)(x)\ \overline{f(x)}\ {\rm d}x \\
  & = & \int_G \sum_{[\xi]\in\widehat{G}} {\rm dim}(\xi)
  \ {\rm Tr}\left( \xi(x)\ \widehat{a}(\xi)\ \widehat{f}(\xi) \right)
  \overline{\sum_{[\eta]\in\widehat{G}} {\rm dim}(\eta)
    \ {\rm Tr}\left( \eta(x)\ \widehat{f}(\eta) \right)}
  \ {\rm d}x \\
  & = & \int_G \sum_{[\xi]\in\widehat{G}} {\rm dim}(\xi)
  \sum_{k,l,m=1}^{{\rm dim}(\xi)} \xi(x)_{kl} \widehat{a}(\xi)_{lm}
  \widehat{f}(\xi)_{mk}
  \overline{\sum_{[\eta]\in\widehat{G}} {\rm dim}(\eta)
    \sum_{p,q=1}^{{\rm dim}(\eta)} \eta(x)_{pq} \widehat{f}(\eta)_{qp}}
  \ {\rm d}x \\
  & = & \sum_{[\xi]\in\widehat{G}} {\rm dim}(\xi)
  \sum_{k,l,m=1}^{{\rm dim}(\xi)} \widehat{a}(\xi)_{lm}
  \widehat{f}(\xi)_{mk} \overline{\widehat{f}(\xi)_{lk}} \\
  & = & \sum_{[\xi]\in\widehat{G}} {\rm dim}(\xi)
  \ {\rm Tr}\left( \widehat{a}(\xi)
  \ \widehat{f}(\xi)\ \widehat{f}(\xi)^\ast \right),
\end{eqnarray*}
where we used the orthogonality of the matrix elements of
the representations.
The first claimed equality can be proven in an analogous way.
\end{proof} 

We also record the Sobolev boundedness 
result that was Theorem 10.8.1 in \cite{RT-book}:

\begin{thm}
\label{THM:su2-Sobolev}
Let $G$ be a compact Lie group.
Let $A$ be a continuous linear operator 
from $C^\infty(G)$ to $C^\infty(G)$ and
let $\sigma_A$ be its symbol. 
Assume that
there exist constants $m,C_\alpha\in\R$ such that
$$
  \|\partial_x^\alpha\sigma_A(x,\xi)\|_{op}\leq C_\alpha\ 
  \jp{\xi}^m
$$
holds for all $x\in G$, $\xi\in{\rm Rep}(G)$, and 
all multi-indices $\alpha$.
Then
$A$ extends to a bounded operator from $H^s(G)$ to $H^{s-m}(G)$
for all $s\in\R$.
\end{thm}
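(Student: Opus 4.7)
The plan is to prove $\|Au\|_{H^{s-m}(G)}\le C\|u\|_{H^s(G)}$ by passing to the Fourier side, expressing $\widehat{Au}(\eta)$ as a discrete matrix kernel acting linearly on the sequence $\{\widehat u(\xi)\}_{[\xi]\in\Gh}$, establishing off-diagonal decay of this kernel via integration by parts with the bi-invariant Laplacian $\mathcal L_G$, and concluding by a Schur-type test on the resulting weighted $\ell^2(\Gh)$ spaces.

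By Parseval and the normalisation of $H^s(G)$,
$$\|Au\|_{H^{s-m}(G)}^2 = \sum_{[\eta]\in\Gh} \dim(\eta)\jp{\eta}^{2(s-m)}\|\widehat{Au}(\eta)\|_{HS}^2,$$
while from \eqref{EQ:Ga-A-op-def},
$$\widehat{Au}(\eta) = \sum_{[\xi]\in\Gh}\dim(\xi) \int_G \Tr\!\left(\xi(x)\sigma_A(x,\xi)\widehat u(\xi)\right)\eta(x)^*\drm x =: \sum_{[\xi]\in\Gh}\dim(\xi)\,\mathcal K(\eta,\xi)[\widehat u(\xi)],$$
where $\mathcal K(\eta,\xi)\colon\mathbb C^{\dim(\xi)\times\dim(\xi)}\to\mathbb C^{\dim(\eta)\times\dim(\eta)}$ is a linear map whose matrix entries are Fourier coefficients at $\eta$ of the matrix-valued function $x\mapsto\xi(x)\sigma_A(x,\xi)$.

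The heart of the proof is an off-diagonal estimate for $\mathcal K(\eta,\xi)$. Using $\eta(x)^* = \jp{\eta}^{-2N}(I-\mathcal L_G)^N\eta(x)^*$ and integrating by parts (self-adjointness of $\mathcal L_G$), the Leibniz expansion of $(I-\mathcal L_G)^N[\xi(x)\sigma_A(x,\xi)]$ is a finite sum of products $(P_x\xi)(x)\cdot(Q_x\sigma_A)(x,\xi)$, where $P_x,Q_x$ are left-invariant differential operators of orders summing to at most $2N$. The first factor satisfies $\|P_x\xi(x)\|_{op}\le C\jp{\xi}^{\deg P_x}$ (a standard consequence of $\xi$ being a unitary representation, since $P_x\xi(x) = \xi(x)\,d\xi(P_x)$ with $\|d\xi(P_x)\|_{op}$ controlled by powers of $\jp{\xi}$), while the hypothesis yields $\|Q_x\sigma_A(x,\xi)\|_{op}\le C\jp{\xi}^m$. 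Passing from operator to Hilbert--Schmidt norms gives
$$\|\mathcal K(\eta,\xi)\|_{HS\to HS}\le C_N\,\jp{\eta}^{-2N}\,\jp{\xi}^{2N+m+r}$$
for all $N\ge 0$, where $r$ is a fixed exponent absorbing the polynomial growth $\dim(\xi)\le C\jp{\xi}^r$ on $\Gh$. The symmetric argument, using $(I-\mathcal L_G)^N\xi(x) = \jp{\xi}^{2N}\xi(x)$ and transferring derivatives onto $\sigma_A\cdot\eta^*$ instead, yields the complementary bound
$$\|\mathcal K(\eta,\xi)\|_{HS\to HS}\le C_N\,\jp{\xi}^{m-2N+r}\,\jp{\eta}^{2N}.$$

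Finally, I would apply a Schur-type test to the weighted kernel $\tilde{\mathcal K}(\eta,\xi) := \dim(\eta)^{1/2}\dim(\xi)^{1/2}\jp{\eta}^{s-m}\jp{\xi}^{-s}\mathcal K(\eta,\xi)$: use the first bound when $\jp{\eta}\ge 2\jp{\xi}$, the second when $\jp{\xi}\ge 2\jp{\eta}$, and either when $\jp{\eta}\asymp\jp{\xi}$. Choosing $N$ large enough that $\sum_{[\xi]\in\Gh}\jp{\xi}^{-K}$ converges for the required $K$ — which it does, by the Weyl-type polynomial growth of $\Gh$ — both Schur conditions $\sup_\eta\sum_{[\xi]}\|\tilde{\mathcal K}(\eta,\xi)\|_{HS\to HS}<\infty$ and $\sup_\xi\sum_{[\eta]}\|\tilde{\mathcal K}(\eta,\xi)\|_{HS\to HS}<\infty$ hold, giving the desired boundedness. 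The main technical difficulty is the careful bookkeeping of dimension factors and of powers of $\jp{\xi},\jp{\eta}$ arising from the Leibniz expansion of $(I-\mathcal L_G)^N$; all such polynomial factors are ultimately absorbed by taking $N$ sufficiently large.
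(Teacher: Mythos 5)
The paper does not prove this theorem; it quotes it from \cite[Theorem~10.8.1]{RT-book}, where the argument is carried out on the group side by ``freezing'' the symbol: one writes $Au(x)=(A_{x'}u)(x)|_{x'=x}$ with $A_{x'}$ the Fourier multiplier with symbol $\sigma_A(x',\cdot)$, applies Sobolev embedding in the auxiliary variable $x'$ (which is why finitely many $x$-derivatives of $\sigma_A$ suffice), and uses the Plancherel identity for each frozen multiplier. Your route through a Fourier-side kernel $\mathcal K(\eta,\xi)$ and a Schur test is genuinely different, and as written it has a gap that I do not think can be repaired without new input.

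The gap is at the Schur test. Both of your bounds decay only in the \emph{ratio} of $\jp{\eta}$ and $\jp{\xi}$: they read $\jp{\xi}^{m+r}\p{\jp{\xi}/\jp{\eta}}^{2N}$ and $\jp{\xi}^{m+r}\p{\jp{\eta}/\jp{\xi}}^{2N}$. On the band $\tfrac12\jp{\xi}\leq\jp{\eta}\leq 2\jp{\xi}$ neither bound improves as $N$ grows, so there the weighted kernel is only controlled by $\dim(\eta)^{1/2}\dim(\xi)^{1/2}\jp{\xi}^{r}$ with no decay at all, while by the Weyl law the number of classes $[\eta]$ in that band grows polynomially in $\jp{\xi}$. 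Hence the column sum $\sup_{\xi}\sum_{[\eta]}\n{\tilde{\mathcal K}(\eta,\xi)}$ diverges: the polynomial factors are \emph{not} absorbed by taking $N$ large precisely where they matter, since increasing $N$ buys nothing on the diagonal band. The dimension bookkeeping is also lossy: already for $A=M_f$ multiplication by a fixed smooth $f$ (so $\sigma_A(x,\xi)=f(x)I_{\dim(\xi)}$, $m=0$), the kernel is supported on the finitely many $[\eta]$ occurring in $[\xi]\otimes[\zeta]$ with $[\zeta]$ in the Fourier support of $f$, but the crude $HS\to HS$ estimate gives $\n{\mathcal K(\eta,\xi)}\lesssim\dim(\xi)^{1/2}\dim(\eta)^{-1/2}$, so your weighted kernel is of size $\dim(\xi)\to\infty$ even in this trivial case. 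To make a frequency-side argument work one needs decay in a \emph{difference}-type distance between $[\eta]$ and $[\xi]$, which requires knowing which $[\eta]$ occur in tensor products $[\xi]\otimes[\zeta]$ (Clebsch--Gordan data) together with exact Peter--Weyl orthogonality rather than norm bounds on matrix blocks. This is exactly the obstruction the authors flag in the introduction when explaining why arguments performed in the frequency variables ``do not readily work'' on $\Gh$; the cited proof sidesteps it entirely by staying on the group side.
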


\subsection{Amplitudes on $G$}

Let $0\leq\delta,\rho\leq 1$.
An amplitude $a\in{\cal A}_{\rho,\delta}^m(G)$ is a mapping
defined on $G\times G\times\Rep(G)$, smooth in
$x$ and $y$, such that
for an irreducible unitary representation 
$\xi:G\to{\cal U}({\cal H}_\xi)$ 
we have\footnote{Especially, if $\xi$ is a unitary matrix 
representation of dimension $d$,
then $a(x,y,\xi)\in\Bbb C^{d\times d}$}
linear operators
$$
  a(x,y,\xi):{\cal H}_\xi\to{\cal H}_\xi,
$$
and for a strongly admissible collection of difference
operators $\triangle_\xi^\alpha$
the amplitude satisfies the {\it amplitude inequalities}
$$
  \left\| \triangle_\xi^\alpha \partial_x^\beta \partial_y^\gamma
    a(x,y,\xi) \right\|_{op} \leq C_{\alpha\beta\gamma}
  \ \langle\xi\rangle^{m-\rho|\alpha|+\delta|\beta+\gamma|},
$$
for all multi-indices $\alpha,\beta,\gamma$
and for all $(x,y,[\xi])\in G\times G\times\Gh$.
For an amplitude $a$,
the {\it amplitude operator}
${\rm Op}(a):C^\infty(G)\to{\cal D}'(G)$ is defined by
\begin{equation}\label{EQ:Ga-def-amp}
 {\rm Op}(a)u(x) := \sum_{[\eta]\in\widehat{G}} {\rm dim}(\eta)
    \ {\rm Tr}\left( \eta(x) \int_G 
    a(x,y,\eta)\ u(y)\  \eta(y)^\ast\ {\rm d}y
      \right).
\end{equation} 
Notice that if here $a(x,y,\eta)=\sigma_A(x,\eta)$ 
then ${\rm Op}(a)=A$ as in \eqref{EQ:Ga-A-op-def}. This
definition can be justified as follows:

\begin{prop}
Let $0\leq\delta<1$ and $0\leq \rho\leq 1$, and let
$a\in{\cal A}_{\rho,\delta}^m(G)$.
Then ${\rm Op}(a)$ is a continuous linear operator
from $C^\infty(G)$ to $C^\infty(G)$.
\end{prop}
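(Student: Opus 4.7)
The plan is to show directly that for every $u\in C^\infty(G)$ the series defining $\Op(a)u$ converges in the Fr\'echet topology of $C^\infty(G)$, with continuous dependence on $u$. The main mechanism is integration by parts in $y$ against the bi-invariant Laplacian $\Lap$. Using $(I-\Lap_{G,y})\eta(y)^*=\jp{\eta}^2\,\eta(y)^*$, iterating $k$ times and expanding $(I-\Lap_{G,y})^k$ by Leibniz, the amplitude inequalities give
$$\Bigl\|\int_G(\partial_x^{\beta_2} a)(x,y,\eta)\,u(y)\,\eta(y)^*\,\d y\Bigr\|_{op}\leq C_k\,\jp{\eta}^{m+\delta|\beta_2|-2k(1-\delta)}\sum_{|\gamma|\leq 2k}\|\partial_y^\gamma u\|_{L^\infty(G)}$$
uniformly in $x\in G$, for every multi-index $\beta_2$ and every $k\in\N$.

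Next I would differentiate the series for $\Op(a)u(x)$ term-by-term in $x$. Writing $\partial_x^\beta$ via left-invariant derivatives and distributing by Leibniz, factors landing on $\eta(x)$ contribute at most $\jp{\eta}^{|\beta_1|}$, since left-invariant differentiation produces $\eta(x)$ multiplied by a polynomial of degree $|\beta_1|$ in the operators $\d\eta(X_j)$, each of operator norm $\lesssim\jp{\eta}$ on $\Hcal_\eta$. Combining with the previous display, the $[\eta]$-summand of $\partial_x^\beta\Op(a)u(x)$ has operator norm at most $C_{\beta,k}\,\jp{\eta}^{m+|\beta|-2k(1-\delta)}$ times a finite $C^\infty(G)$ semi-norm of $u$ (the maximum of $|\beta_1|+\delta|\beta_2|$ over $\beta_1+\beta_2=\beta$ being $|\beta|$, attained at $\beta_1=\beta$, because $\delta<1$). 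Using $|\Tr(M)|\leq\dim(\eta)\|M\|_{op}$ and the Weyl-type bound $\sum_{[\eta]\in\Gh}\dim(\eta)^2\jp{\eta}^{-N}<\infty$ for $N$ sufficiently large, choosing $k$ large enough yields absolute uniform-in-$x$ convergence of $\partial_x^\beta\Op(a)u(x)$ for every $\beta$, together with the required Fr\'echet continuity estimate.

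The hard part will be the uniform operator-norm bound $\|\partial_x^{\beta_1}\eta(x)\|_{op}\lesssim\jp{\eta}^{|\beta_1|}$ across all $[\eta]\in\Gh$; once this (standard) fact is in hand the rest of the argument reduces to the $\delta<1$ trade-off which lets the $y$-integration-by-parts gain $-2k(1-\delta)$ dominate the $x$-derivative loss $+|\beta|$ once $k$ is chosen large enough. Note that the parameter $\rho$ plays no role here; it would enter only when one wanted to refine $\Op(a)$ into a genuine pseudo-differential operator via an asymptotic symbol expansion.
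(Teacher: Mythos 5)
Your argument is correct and is essentially the paper's own proof: the paper likewise integrates by parts in the $y$-integral with $\jp{\eta}^{-2}(I-\Lap_G)$ arbitrarily many times and controls the sum over $[\eta]\in\Gh$ via the Weyl-asymptotics bound on $\dim(\eta)$, with the $\delta<1$ trade-off implicit. You merely make explicit the details the paper leaves to the reader (the Leibniz expansion hitting $a$ and $u$, the bound $\|\partial_x^{\beta_1}\eta(x)\|_{op}\lesssim\jp{\eta}^{|\beta_1|}$ via $\|\d\eta(X_j)\|_{op}\leq\jp{\eta}$, the trace estimate, and term-by-term differentiation), all of which are standard and sound.
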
 

\begin{proof}
By the definition of $\jp{\eta}$ we have
$(1-\Lap_G)\eta(y)=\jp{\eta}^2\eta(y)$. On the other hand, 
the Weyl spectral asymptotics formula for the Laplace
operator $\Lap_G$ implies that 
$\jp{\eta}^{-1}\leq C\dim(\eta)^{-2/\dim(G)}$ (see Proposition
10.3.19 in \cite{RT-book}). Consequently, integrating
by parts in the ${\rm d}y$-integral in \eqref{EQ:Ga-def-amp}
with operator $\jp{\eta}^{-2}(I-\Lap_G)$ arbitrarily many
times, we see that the $\eta$-series in
\eqref{EQ:Ga-def-amp} converges, so that $\Op(a)u\in C^\infty(G)$
provided that $u\in C^\infty(G)$. The continuity of
$\Op(a)$ on $C^\infty(G)$ follows by a similar argument.
\end{proof} 

\begin{rem}
In the proof we used the inequality 
$\dim(\eta)\leq C\jp{\eta}^{n/2}$, $n=\dim G$, which easily follows
from the Weyl spectral asymptotic formula (see Proposition
10.3.19 in \cite{RT-book}), and which is enough for the
purposes of the proof. However, a stronger inequality
$\dim(\eta)\leq C\jp{\eta}^{(n-l)/2}$ can be obtained
from the Weyl character formula, with $l=\rank G$.
For the details of this, see e.g. \cite[(11), (12)]{We}.
\end{rem} 

\begin{prop}\label{PROP:Ga-amps}
Let $0\leq\delta<\rho\leq 1$ and let
$a\in{\cal A}_{\rho,\delta}^m(G)$. Then $A=\Op(a)$ is
a pseudo-differential operator on $G$ with a matrix symbol
$\sigma_A\in\Ccl^m_{\rho,\delta}(G)$. Moreover,
$\sigma_A$ has the asymptotic expansion
$$
\sigma_A(x,\xi)
   \sim 
  \sum_{\alpha\geq 0} \frac{1}{\alpha!} \left.\partial_y^{(\alpha)}
  \triangle_\xi^\alpha a(x,y,\xi)\right|_{y=x}.
$$
\end{prop}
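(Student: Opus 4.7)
The plan is to Taylor-expand $a(x,y,\xi)$ in the second argument around $y=x$ via the group Taylor formula \eqref{EQ:RTW-Taylor-exp}, substitute the expansion into \eqref{EQ:Ga-def-amp} for $\Op(a)$, identify each resulting summand as a difference operator applied to a frozen amplitude, and estimate the remainder. The remainder will lie in a strictly lower-order symbol class, giving both the claimed asymptotic expansion and the membership $\sigma_A\in\Ccl^m_{\rho,\delta}(G)$.

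First, fix $x$ and $\xi$ and apply \eqref{EQ:RTW-Taylor-exp} to the function $w\mapsto a(x,xw,\xi)$ at $w=e$; by left-invariance of $\partial_y^{(\alpha)}$ this yields
\[
a(x,y,\xi) \;=\; \sum_{|\alpha|<N}\frac{1}{\alpha!}\, q^\alpha(x^{-1}y)\, b^{(\alpha)}(x,\xi) \;+\; r_N(x,y,\xi),
\]
with $b^{(\alpha)}(x,\xi):=\partial_y^{(\alpha)} a(x,y,\xi)|_{y=x}$ a genuine symbol in $\Ccl^{m+\delta|\alpha|}_{\rho,\delta}(G)$, and the remainder of the form $r_N=\sum_{|\alpha|=N} q^\alpha(x^{-1}y)\,\widetilde a_\alpha(x,y,\xi)$ with $\widetilde a_\alpha$ obeying the same amplitude estimates as $a$. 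By linearity of $\Op$, this reduces the computation to understanding amplitude operators whose amplitude has the special form $q^\alpha(x^{-1}y)\, b(x,\xi)$.

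Second, I compute the symbol of each such special amplitude operator. Passing to the right-convolution kernel and changing variables $z=y^{-1}x$ in \eqref{EQ:Ga-def-amp} expresses the kernel of $\Op(q^\alpha(x^{-1}y)b(x,\xi))$ as a $q^\alpha$-type factor against the inverse Fourier transform of $b(x,\cdot)$. Taking the Fourier transform to obtain the symbol $\int_G R_A(x,z)\,\xi(z)^*\drm z$, and recognising that multiplication by a function vanishing of order $|\alpha|$ at $e$ on the physical side corresponds to a difference operator of order $|\alpha|$ on the dual side, identifies this symbol with $\triangle_\xi^\alpha b(x,\xi)$ (the precise choice of admissible collection being immaterial by Remark~\ref{REM:Ga-rhodel-classes}). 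Since $\triangle_\xi^\alpha$ and $\partial_y^{(\alpha)}$ act on disjoint variables they commute, and the $\alpha$-th term becomes exactly $\frac{1}{\alpha!}\,\partial_y^{(\alpha)}\triangle_\xi^\alpha a(x,y,\xi)|_{y=x}$.

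Third, for the remainder, each of the $N$ factors $q(x^{-1}y)$ present in $r_N$ yields a difference operator on the dual side reducing the symbol order by $\rho$, whereas the $y$-derivatives appearing in the amplitude-class bound cost at most $\delta$ each. Since $\rho>\delta$, the symbol of $\Op(r_N)$ lies in $\Ccl^{m-(\rho-\delta)N}_{\rho,\delta}(G)$. Letting $N\to\infty$ proves the asymptotic expansion, and differentiating it term by term in $x$ and applying $\triangle_\xi^\gamma$ verifies the symbolic bounds of characterization~$(D)$ in Theorem~\ref{thm:main} together with Remark~\ref{REM:Ga-rhodel-classes}, establishing $\sigma_A\in\Ccl^m_{\rho,\delta}(G)$. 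The main obstacle is the second step: translating ``multiplication by $q^\alpha(x^{-1}y)$ in $y$'' into ``application of $\triangle_\xi^\alpha$ in $\xi$'' requires disentangling the left-translation by $x$ from the Fourier-side difference operators in a genuinely non-commutative setting. The Leibniz formula of Proposition~\ref{prop:Ga-Leibniz} is the natural tool, but it generates cross-terms whose absorption into the lower-order remainder relies essentially on the gap $\rho-\delta>0$; maintaining uniformity of all constants in $\xi\in\Rep(G)$ throughout is the principal bookkeeping burden.
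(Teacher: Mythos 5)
Your proposal follows essentially the same route as the paper's proof: compute the symbol via $\sigma_A(x,\xi)=\xi(x)^*(A\xi)(x)$, change variables to the kernel variable, apply the group Taylor expansion \eqref{EQ:RTW-Taylor-exp} in the second argument of the amplitude at $y=x$, identify multiplication by $q^\alpha$ on the kernel side with the difference operators $\triangle_\xi^\alpha$ on the dual side, and control the remainder by the gap $\rho-\delta>0$ (which the paper likewise only sketches, referring to the composition-formula argument). The only cosmetic caveat is to expand in the kernel variable $y^{-1}x$ rather than $x^{-1}y$ so that the $q$-factors match the convention $\triangle_\xi^\alpha\widehat{s}(\xi)=\widehat{q_\alpha s}(\xi)$, which your appeal to Remark~\ref{REM:Ga-rhodel-classes} already accommodates.
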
 

\begin{proof}
If $\sigma_A$ is the matrix symbol of the continuous linear operator
$A=\Op(a):C^\infty(G)\to C^\infty(G)$, we can find it from
the formula $\sigma_A(x,\xi)=\xi(x)^* (A\xi)(x)$. By fixing
some basis in the representation spaces, we have
\begin{eqnarray*}
  && \sigma_A(x,\xi)_{mn} =  \sum_{l=1}^{{\rm dim}(\xi)}
  \xi(x^{-1})_{ml}\ \left( A\xi_{ln} \right)(x) \\
  & = & \sum_{l=1}^{{\rm dim}(\xi)}
  \xi(x^{-1})_{ml} \int_G
    \sum_{[\eta]\in\widehat{G}} {\rm dim}(\eta)\ {\rm Tr}\left(
      \eta(x)\ a(x,y,\eta)\ \xi(y)_{ln}\ 
      \eta(y)^\ast\right) {\rm d}y \\
  & = & \int_G \xi(x^{-1}y)_{mn} \sum_{[\eta]\in\widehat{G}}
  {\rm dim}(\eta)\ {\rm Tr}\left( \eta(y^{-1}x)\ a(x,y,\eta)\right) {\rm d}y \\
  & = & \int_G \xi(x^{-1}y)_{mn} \sum_{[\eta]\in\widehat{G}}
  {\rm dim}(\eta) \sum_{j,k=1}^{{\rm dim}(\eta)} \eta(y^{-1}x)_{jk}
  \ a(x,y,\eta)_{kj}\ {\rm d}y \\
  & = & \int_G \xi(z^{-1})_{mn} \sum_{[\eta]\in\widehat{G}}
  {\rm dim}(\eta) \sum_{j,k=1}^{{\rm dim}(\eta)} \eta(z)_{jk}
  \ a(x,xz^{-1},\eta)_{kj}\ {\rm d}z \\
  & \sim &
  \sum_{\alpha\geq 0} \frac{1}{\alpha!}\ \partial_u^{(\alpha)}
  \sum_{[\eta]\in\widehat{G}} {\rm dim}(\eta) 
  \sum_{j,k=1}^{{\rm dim}(\eta)}
  \left. a(x,u,\eta)_{kj}\right|_{u=x}
  \int_G \xi(z^{-1})_{mn}\ \eta(z)_{jk}\ q_\alpha(z)\ {\rm d}z,
\end{eqnarray*}
by the Taylor expansion \eqref{EQ:RTW-Taylor-exp}.
Using difference operators
$\triangle_\xi^\alpha\widehat{s}(\xi):=\widehat{q_\alpha s}(\xi)$,
we find
\begin{eqnarray*}
  &&
  \sum_{[\eta]\in\widehat{G}} {\rm dim}(\eta) 
  \sum_{j,k=1}^{{\rm dim}(\eta)}
  a(x,u,\eta)_{kj}
  \int_G \xi(z^{-1})\ \eta(z)_{jk}\ q_\alpha(z)\ {\rm d}z \\
  & = & 
  \int_G \xi(z)^\ast\ q_\alpha(z)
  \sum_{[\eta]\in\widehat{G}} {\rm dim}(\eta)\ {\rm Tr}\left(
     \eta(z)\ a(x,u,\eta) \right) {\rm d}z 
    =  \triangle_\xi^\alpha a(x,u,\xi).
\end{eqnarray*}
Thus
\begin{eqnarray*}
  \sigma_A(x,\xi)
  & \sim &
  \sum_{\alpha\geq 0} \frac{1}{\alpha!}\ \partial_u^{(\alpha)}
  \int_G \xi(z)^\ast\ q_\alpha(z)
  \left. \sum_{[\eta]\in\widehat{G}} {\rm dim}(\eta)
  \ {\rm Tr}\left(\eta(z)\ a(x,u,\eta) \right)\ 
  {\rm d}z \right|_{u=x} \\
  & = & \sum_{\alpha\geq 0} \frac{1}{\alpha!} \left.\partial_u^{(\alpha)}
  \triangle_\xi^\alpha a(x,u,\xi)\right|_{u=x}.
\end{eqnarray*}
The remainder in this asymptotic expansion
can be dealt with in a way similar to the argument for
the composition formulae, so we omit the proof.
\end{proof}

\subsection{Properties of even and odd functions}

On a group $G$,
function $f:G\to\mathbb C$ is called {\em even}
if it is inversion-invariant, i.e. if $f(x^{-1})=f(x)$ 
for every $x\in G$.
Function $f:G\to\mathbb C$ is called {\em odd}
if $f(x^{-1})=-f(x)$ for every $x\in G$.
Recall that $f:G\to\mathbb C$ is {\em central} if
$
  f(xy)=f(yx)
$
for all $x,y\in G$.
Linear combinations of characters
$\chi_\xi=(x\mapsto{\rm Tr}(\xi(x)))$
of irreducible unitary representations $\xi$ of a compact group $G$
are central, and such linear combinations are dense
among the central functions of $C(G)$.
When $G$ is a compact Lie group,
for $Y\in\mathfrak g$ and $f\in C^\infty(G)$ we define
\begin{eqnarray*}
  L_Y f(x)  :=  \frac{{\rm d}}{{\rm d}t} f(x\exp(tY))|_{t=0}, \quad
  R_Y f(x)  :=  \frac{{\rm d}}{{\rm d}t} f(\exp(tY)x)|_{t=0},
\end{eqnarray*}
so that $L_Y,R_Y$ are the first order differential operators,
$L_Y$ being left-invariant and $R_Y$ right-invariant.
For a central function $f$ we have $L_Y f=R_Y f$,
which would not be true for an arbitrary smooth function $f$.
Moreover, if $f$ is even and central
then
\begin{eqnarray*}
  L_Y f(x^{-1})
  & = & - L_Y f(x),
\end{eqnarray*}
i.e. $L_Y f$ is odd in this case.
Similarly $L_Y f$ is even for odd central functions $f$,
but $L_Y f$ does not have to be central.
More precisely, for central $f\in C^\infty(G)$ we obtain
\begin{eqnarray*}
  L_Y f(u^{-1}xu)
  & = & L_{uYu^{-1}}f(x),
\end{eqnarray*}
where $u\in G$.
For higher order derivatives of even and odd functions, 
taking the differential of
$$
f(x \exp(t_1 X_1)\ldots \exp(t_k X_k))=
\pm f(x^{-1} \exp((-t_k) X_k)\ldots \exp((-t_1) X_1))
$$
at $t_1=\cdots=t_k=0$,
we obtain

\begin{prop}\label{PROP:Ga-even-odd}
Let $f\in C^\infty(G)$ be even and central,
and $X_1,\cdots,X_k\in\mathfrak{g}$. 
Then
\begin{eqnarray*}
  L_{X_1} L_{X_2}\cdots L_{X_{k-1}} L_{X_k} f(x^{-1}) 
  =  (-1)^k 
  L_{X_k} L_{X_{k-1}}\cdots L_{X_2} L_{X_1} f(x).
\end{eqnarray*}
Similarly, if $f\in C^\infty(G)$ is an odd central function,
then we have the equality
\begin{eqnarray*}
  L_{X_1} L_{X_2}\cdots L_{X_{k-1}} L_{X_k} f(x^{-1}) 
  =  (-1)^{k+1} 
  L_{X_k} L_{X_{k-1}}\cdots L_{X_2} L_{X_1} f(x).
\end{eqnarray*}
\end{prop}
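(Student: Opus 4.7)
The plan is to unpack $L_{X_1}\cdots L_{X_k}f(x^{-1})$ as a $k$-fold mixed partial derivative, use the even/odd hypothesis $f(h^{-1})=\varepsilon f(h)$ (with $\varepsilon\in\{+1,-1\}$) to invert the argument, invoke centrality to bring $x$ back to the outside, and then recognise the result by matching it to the same iterated-derivative formula with the $X_j$'s in reversed order.

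First I would establish by induction on $k$ the identity
\[
L_{X_1}L_{X_2}\cdots L_{X_k}\, g(y) = \left.\frac{\partial^k}{\partial t_1\,\partial t_2\cdots \partial t_k}\right|_{t=0} g\bigl(y\exp(t_1 X_1)\exp(t_2 X_2)\cdots\exp(t_k X_k)\bigr)
\]
for every $g\in C^\infty(G)$ and $y\in G$. The base case is just the definition of $L_{X_1}$, and the inductive step follows by pulling $\partial_{t_1}|_{t_1=0}$ out past the remaining derivatives, which is legitimate because the map is jointly smooth in $(t_1,\ldots,t_k)$.

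Next, I would take $y=x^{-1}$ and $g=f$, and apply first $f(h^{-1})=\varepsilon f(h)$ with $h = x^{-1}\exp(t_1 X_1)\cdots\exp(t_k X_k)$, then centrality to cycle the remaining $x$ factor back to the left, getting
\[
f\bigl(x^{-1}\exp(t_1 X_1)\cdots\exp(t_k X_k)\bigr) = \varepsilon\, f\bigl(x\exp(-t_k X_k)\cdots\exp(-t_1 X_1)\bigr).
\]
The substitution $s_j := -t_j$ converts the mixed $t$-derivative at the origin into $(-1)^k$ times the corresponding mixed $s$-derivative, and by the iterated-derivative formula of the first step (applied with the reversed list $X_k,X_{k-1},\ldots,X_1$ in place of $X_1,\ldots,X_k$, and using equality of mixed partials) this remaining $s$-derivative equals $L_{X_k}L_{X_{k-1}}\cdots L_{X_1}f(x)$.

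Combining the factors gives the overall sign $\varepsilon(-1)^k$, i.e.\ $(-1)^k$ when $f$ is even and $(-1)^{k+1}$ when $f$ is odd, as asserted. The only real point of care is making sure that after the inversion $h\mapsto h^{-1}$ and the sign change $t_j\mapsto -t_j$ the remaining product of exponentials is read off in the correct order, so that the outer-to-inner pairing of derivative variables with exponential factors matches the left-to-right pairing of $L_{X_j}$'s acting on $f$. No substantial analytic obstacle arises, since the whole argument rests only on the smoothness of $f$, the defining formula for $L_Y$, centrality, and the even/odd hypothesis.
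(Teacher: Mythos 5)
Your proof is correct and takes essentially the same route as the paper, which obtains the proposition by differentiating the identity $f(x\exp(t_1X_1)\cdots\exp(t_kX_k))=\pm f(x^{-1}\exp(-t_kX_k)\cdots\exp(-t_1X_1))$ (even/odd plus centrality) at $t_1=\cdots=t_k=0$. Your write-up merely makes explicit the iterated-derivative formula for $L_{X_1}\cdots L_{X_k}$, the cyclic use of centrality, the substitution $s_j=-t_j$, and the equality of mixed partials, all of which are left implicit in the paper.
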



\section{Proof of the sharp G{\aa}rding inequality}
\label{SEC:Ga-proofs}

We notice that
if a linear operator $Q:H^{(m-1)/2}(G)\to H^{-(m-1)/2}(G)$ is bounded
then
\begin{eqnarray*}
  \Repa \p{Qu,u}_{L^2}
  & \geq & -\left|\p{Qu,u}_{L^2}\right| \\
  & \geq & -\|Qu\|_{H^{-(m-1)/2}}\ \|u\|_{H^{(m-1)/2}} \\
  & \geq & -\|Q\|_{{\cal L}(H^{(m-1)/2},H^{-(m-1)/2})}\ \|u\|_{H^{(m-1)/2}}^2.
\end{eqnarray*}
Hence Theorem~\ref{THM:Garding} would follow if we could show that
$A=P+Q$,
where $P$ is positive (on $C^\infty(G)\subset L^2(G)$)
and $Q:H^{(m-1)/2}(G)\to H^{-(m-1)/2}(G)$ is bounded.
The proof of this decomposition will be done in several steps.

\subsection{Construction of $w_\xi$}
\label{SEC:wxi}

First, we construct an auxiliary function $w_\xi$ which will
play a crucial role for our proof. 

We can treat $G$ as a closed subgroup of
${\rm GL}(N,\Bbb R)\subset\Bbb R^{N\times N}$ 
for some $N\in\Bbb N$.
Then its Lie algebra $\mathfrak{g}\subset\Bbb R^{N\times N}$
is an $n$-dimensional vector subspace
(hence identifiable with $\Bbb R^n$)
such that $[A,B]:=AB-BA\in\mathfrak{g}$ for every $A,B\in\mathfrak{g}$.
Let $U\subset G$ be a neighbourhood of the 
neutral element $e\in G$,
and let $V\subset {\mathfrak g}$ be a neighbourhood of
$0\in\mathfrak{g}\cong\Bbb R^n$,
such that the matrix exponential mapping is a 
diffeomorphism $\exp:V\to U$.

For the construction and for the 
notation only in Section \ref{SEC:wxi},
we define the central norm $|\cdot|$ on ${\mathfrak g}$ as 
follows\footnote{In fact, any central norm 
$|\cdot|$ on ${\mathfrak g}$ will work.}.
Take the Euclidean norm $|\cdot|_0$ on ${\mathfrak g}$ and define
\begin{equation}\label{EQ:inorm}
|X|=\int_G |u X u^{-1}|_0 \drm u,
\end{equation} 
where we may view the product under the integral as the product of
matrices in $\R^{N\times N}$. Then by definition the norm
\eqref{EQ:inorm} is invariant by the adjoint representation, and
we have, in particular $|\exp^{-1}(xy)|=|\exp^{-1}(yx)|$, etc.

We may assume that $V$ is the open ball
$V=\Bbb B(0,r)=\{Z\in\Bbb R^n:\ |Z|<r\}$ of radius $r>0$.
Let $\phi:[0,r)\to [0,\infty)$ be a smooth function such that
$(Z\mapsto\phi(|Z|)):\mathfrak{g}\to\Bbb R$ is supported in $V$
and $\phi(s)=1$ for small $s>0$.
For every $\xi\in \Rep(G)$ we define
\begin{equation}\label{EQ:Ga-def-wxi}
  w_\xi(x) :=
  \phi(|\exp^{-1}(x)|\jp{\xi}^{1/2}) 
  \ \psi(\exp^{-1}(x))\ \jp{\xi}^{n/4},
\end{equation} 
where 
$$\psi(Y)=C_0 
\left|\det D\exp (Y)\right|^{-1/2} f(Y)^{-1/2},$$ 
$D\exp$ is 
the Jacobi matrix of $\exp$, $f(Y)$ 
is the density with respect to the Lebesgue
measure of the Haar measure on $G$ pulled back to
$\mathfrak g\cong\Rn$ by the exponential mapping, and with 
constant $C_0=\p{\int_\Rn \phi(|Z|)^2 \drm Z}^{-1/2}$. 
By $I_{\dim(\xi)}$ we
denote the identity mapping on $\C^{\dim(\xi)}$.
For $x,y\in G$ close to each other, 
${\rm dist}(x,y)$ is the geodesic distance between
$x$ and $y$.

\begin{lem}\label{LEM:Ga-wxi}
We have $w_\xi \in C^\infty(G)$, $w_\xi(e)=C_0\jp{\xi}^{n/4}$, 
$w_\xi$ is central and inversion-invariant,
i.e. $w_\xi(xy)=w_\xi(yx)$ and $w_\xi(x^{-1})=w_\xi(x)$ 
for every $x,y\in G$. Also,
${\rm dist}(x,e)\approx |\exp^{-1}(x)|\leq r\jp{\xi}^{-1/2}$ 
on the support of $w_\xi$. Moreover, 
$\left\|w_\xi\right\|_{L^2(G)}=1$ for all 
$\xi\in\Rep(G)$.
Finally, we have 
$\left((x,\xi)\mapsto w_\xi(x) I_{{\rm dim}(\xi)}\right)
\in\Ccl^{n/4}_{1,1/2}(G)$.
\end{lem}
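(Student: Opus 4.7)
The plan is to verify each property in turn; the elementary claims follow directly from the construction in \eqref{EQ:Ga-def-wxi}, while the symbol-class statement is the main point.

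Smoothness of $w_\xi$ on $G$ holds because $\phi$, $\psi$, and $\exp^{-1}$ are smooth where $w_\xi$ does not vanish and $w_\xi$ is compactly supported in the chart $U$. At $x = e$: $\exp^{-1}(e) = 0$, $\phi(0) = 1$, and $\psi(0) = C_0$ (from $D\exp(0) = I$ and $f(0) = 1$), giving $w_\xi(e) = C_0 \jp{\xi}^{n/4}$. The support estimate follows from $\phi$ being supported in $[0, r)$, together with the standard equivalence $\text{dist}(x, e) \approx |\exp^{-1}(x)|$ on the coordinate chart. The $L^2$-normalization is a direct computation: change variables $x = \exp Y$ (Jacobian $f(Y)$) and rescale $Y = Z/\jp{\xi}^{1/2}$, so that the factors $C_0^2 |\det D\exp|^{-1} f^{-1}$ coming from $\psi^2$, the Haar Jacobian $f(Y)$, the prefactor $\jp{\xi}^{n/2}$, and the rescaling Jacobian $\jp{\xi}^{-n/2}$ combine to leave $C_0^2 \int_{\R^n} \phi(|Z|)^2\, dZ = 1$ by the choice of $C_0$.

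Centrality and inversion-invariance follow from invariance properties of each factor. The norm $|\cdot|$ defined in \eqref{EQ:inorm} is Ad-invariant by construction, and both $|\det D\exp(Y)|$ and $f(Y)$ are Ad-invariant (from the conjugation-equivariance of $\exp$ and of Haar). Thus $w_\xi(uxu^{-1}) = w_\xi(x)$, which is equivalent to the centrality identity $w_\xi(xy) = w_\xi(yx)$. For inversion-invariance: $\exp^{-1}(x^{-1}) = -\exp^{-1}(x)$; $|-Y| = |Y|$; $f(-Y) = f(Y)$ by unimodularity of compact $G$ and inversion-invariance of Haar; and $|\det D\exp(-Y)| = |\det D\exp(Y)|$ from the explicit formula $\det D\exp(Y) = \prod_j \bigl(\sin(\mu_j/2)/(\mu_j/2)\bigr)^2$ where $\pm i\mu_j$ are the eigenvalues of $\mathrm{ad}\,Y$, which is manifestly symmetric under $\mu_j \to -\mu_j$.

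The remaining point — that $\sigma(x, \xi) := w_\xi(x) I_{\dim(\xi)}$ lies in $\Ccl^{n/4}_{1, 1/2}(G)$ — is the main work. By Remark \ref{REM:Ga-rhodel-classes} applied to the strongly admissible family $\{q_{ij}\}$ of Lemma \ref{LEM:RTW-strong-adms}, we must show
\[
\|\triangle^\alpha \partial_x^\beta \sigma(x, \xi)\|_{op} \leq C_{\alpha\beta} \jp{\xi}^{n/4 - |\alpha| + |\beta|/2}.
\]
The derivative bound (case $\alpha = 0$) is routine: by the chain rule, each $\partial_x$ applied to $\phi(|\exp^{-1}(x)|\jp{\xi}^{1/2})$ costs at most $\jp{\xi}^{1/2}$, while $\psi$ and its derivatives are bounded on the compact support, yielding $|\partial_x^\beta w_\xi(x)| \leq C_\beta \jp{\xi}^{n/4 + |\beta|/2}$. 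The hard part is the difference-operator bound. I plan to compute $\triangle^\alpha$ via the distributional kernel formula $\triangle_{q_{ij}} \sigma(x, \xi) = \int_G q_{ij}(z) R_\sigma(x, z) \xi(z)^* \drm z$ with $R_\sigma(x, z) = \sum_{[\eta]} \dim(\eta)\, w_\eta(x)\, \chi_\eta(z)$. The required gain of $\jp{\xi}^{-|\alpha|}$ per $|\alpha|$ differences will be extracted in two halves: first, the $|\alpha|$-fold vanishing of $q^\alpha$ at $e$, combined with the support concentration $\text{dist}(z, e) \lesssim \jp{\xi}^{-1/2}$ of the essential part of the kernel, yields a factor $\jp{\xi}^{-|\alpha|/2}$; the remaining $\jp{\xi}^{-|\alpha|/2}$ comes from a more refined Fourier-side analysis exploiting the centrality of $w_\eta$ (so $\widehat{w_\eta}(\mu) = \lambda(\eta, \mu) I$ is scalar-times-identity by Schur's lemma), the Peter--Weyl orthogonality relations, and the finite Leibniz formula from Proposition \ref{prop:Ga-Leibniz} to distribute the differences across the matrix-coefficient structure of the kernel.
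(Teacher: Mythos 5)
The elementary parts of your proposal (smoothness, the value at $e$, centrality and inversion-invariance via Ad-invariance of the norm \eqref{EQ:inorm} and of the densities, the support estimate, and the $L^2$-normalisation by the change of variables $Y=Z/\jp{\xi}^{1/2}$) are correct and coincide with the paper's argument, as does the $x$-derivative bound $\|\partial_x^\beta w_\xi\|\leq C_\beta\jp{\xi}^{n/4+|\beta|/2}$. The gap is in the difference-operator estimate, which is the actual content of the last assertion. Your first ``half'' rests on a confusion of variables: the $\jp{\xi}^{-1/2}$-scale localisation of $w_\xi$ is in the $x$-variable (the base point of the symbol), whereas the integral $\int_G q^\alpha(z)\,R_\sigma(x,z)\,\xi(z)^*\drm z$ is taken in the kernel variable $z$, and for fixed $x$ the kernel $R_\sigma(x,\cdot)=\sum_{[\eta]}\dim(\eta)\,w_\eta(x)\,\chi_\eta$ is a fixed distribution in $z$, independent of the Fourier variable $\xi$; it is not concentrated in $\{{\rm dist}(z,e)\lesssim\jp{\xi}^{-1/2}\}$ (for $x$ near $e$ it is essentially the kernel of $C_0(1-\Lap_G)^{n/8}$, singular at $z=e$ and spread over all of $G$). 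So the factor $\jp{\xi}^{-|\alpha|/2}$ cannot be extracted as you describe, and the second ``half'' --- Schur's lemma, Peter--Weyl orthogonality and the Leibniz formula producing the remaining $\jp{\xi}^{-|\alpha|/2}$ --- is only announced, not argued; yet this is precisely where the full gain $\rho=1$ per difference must come from, and it is the delicate point of the lemma (note that the factor $\phi(|\exp^{-1}(x)|\jp{\xi}^{1/2})$ looks, naively, like a symbol of type $1/2$ in $\xi$, and the improvement to type $1$ uses that on the support of derivatives of $\phi$ one has $|\exp^{-1}(x)|\approx\jp{\xi}^{-1/2}$ --- an interplay your two-step accounting never invokes).

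For comparison, the paper obtains this estimate by a different route: by Lemma \ref{LEM:frozensymbols} it suffices to show that for each fixed $x_0$ and each $\beta$ the frozen multiplier $\partial_x^\beta w_\xi(x_0)I_{\dim(\xi)}$ lies in $\Ccl^{n/4+|\beta|/2}_{1\#}(G)$. One writes the $x$-derivatives as sums of terms $\chi(\exp^{-1}(x))\,\widetilde\phi(|\exp^{-1}(x)|\jp{\xi}^{1/2})\,\jp{\xi}^{n/4+l/2}I_{\dim(\xi)}$, splits off $\jp{\xi}^{n/4+l/2}I_{\dim(\xi)}$ as the symbol of $(1-\Lap_G)^{n/8+l/4}$ (which is of type $(1,0)$ and can be removed by the composition formula), and then observes the dichotomy for the remaining frozen factor: if $\exp^{-1}(x_0)=0$ it is constant in $\xi$, while if $\exp^{-1}(x_0)\neq 0$ it is compactly supported in $\xi$ and hence the symbol of a smoothing operator; an alternative functional-calculus argument (writing $\varphi_v(B)$ for $B$ the local pullback of $(I-\Lap_G)^{1/4}$ and integrating by parts in the half-wave representation $\varphi_v(B)f=\int_\R{\rm e}^{{\rm i}2\pi tB}f\,\widehat{\varphi_v}(t)\drm t$) handles the same case. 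Your proposal contains neither of these mechanisms, so as it stands the membership $w_\xi I_{\dim(\xi)}\in\Ccl^{n/4}_{1,1/2}(G)$ is not established.
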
 

\begin{proof}
It is easy to see that 
$w_\xi \in C^\infty(G)$, $w_\xi(e)=C_0\jp{\xi}^{n/4}$, 
and that $w_\xi$ is inversion-invariant. 
Clearly
${\rm dist}(x,e)\approx |\exp^{-1}(x)|\leq r\jp{\xi}^{-1/2}$ 
on the support of $w_\xi$
in view of properties of the function $\phi$.
In particular, \eqref{EQ:Ga-def-wxi} is
well-defined and ${\rm supp }\ w_\xi\subset U$.
From \eqref{EQ:inorm} it also follows that $w_\xi$ is central
since $f$ is invariant under adjoint representation as a density
of two bi-invariant measures.

Let us now show that 
$\left\|w_\xi\right\|_{L^2(G)}=1$ for all $\xi\in\Rep (G)$.
Indeed,
\begin{eqnarray*}
  \int_G |w_\xi(x)|^2 \drm x
 & = & \jp{\xi}^{n/2} \int_\Rn \phi(|Y|\jp{\xi}^{1/2})^2 |\psi(Y)|^2
  \abs{\det D\exp(Y)}\ f(Y) \drm Y \\
 & = &C_0^2 \int_\Rn \phi(|Z|)^2\ {\rm d}Z,
\end{eqnarray*}
so that $\|w_\xi\|_{L^2(G)}=1$ in view of the
choice of the constant $C_0$.
Thus, the main thing is to check that  
$w_\xi I_{{\rm dim}(\xi)}\in\Ccl^{n/4}_{1,1/2}(G)$.
By Lemma~\ref{LEM:frozensymbols},
we need to check that for every multi-index
$\beta$ and every $x_0\in G$ we have
$\partial_x^\beta w_\xi(x_0)\in \Ccl^{n/4+|\beta|/2}_{1 \#}(G)$.
We observe that the $x$-derivatives of $w_\xi$ are sums
of terms of the form
\begin{equation}\label{EQ:G:terms}
  \chi(\exp^{-1}(x))\ \widetilde{\phi}(|\exp^{-1}(x)|\jp{\xi}^{1/2})
  \ \jp{\xi}^{n/4+l/2}\ I_{\dim(\xi)},
\end{equation} 
where
$\chi\in C_0^\infty(V)$,
$\widetilde{\phi}\in C_0^\infty(\R)$, 
$\widetilde{\phi}$ is constant near the origin, and $l$ is
an integer such that $0\leq l\leq |\beta|$.
We note that $\jp{\xi}^{n/4+l/2}I_{\dim(\xi)}$ 
is the symbol of the
pseudo-differential operator $(1-\Lap_G)^{n/8+l/4}$, and hence
$\jp{\xi}^{n/4+l/2}I_{\dim(\xi)}\in \Ccl^{n/4+l/2}_{1\#}
\subset\Ccl^{n/4+|\beta|/2}_{1\#}$.
Moreover, we can eliminate
it from the formulae by the composition formulae for
the matrix-valued symbols (see \cite[Thm.~10.7.9]{RT-book}).
Thus we have to check that for every $x_0\in G$, the other terms
in \eqref{EQ:G:terms} fixed at $x=x_0$
are in 
$\Ccl^{0}_{1 \#}(G)$, i.e. that
\begin{equation}\label{EQ:G:terms2}
  \widetilde{\phi}(|\exp^{-1}(x_0)|\jp{\xi}^{1/2})\ I_{\dim(\xi)}
  \in\Ccl^{0}_{1 \#}(G).
\end{equation}
If $\exp^{-1}(x_0)=0$, then this
symbol is a constant times the identity $I_{\dim(\xi)}$
and hence it is in $\Ccl^{0}_{1 \#}(G)$.
On the other hand, if $\exp^{-1}(x_0)\not=0$, then
the symbol \eqref{EQ:G:terms2} is compactly supported
in $\xi$, and hence defines a smoothing operator.
Indeed, in this case it has decay of any order in
$\jp{\xi}$, together with all difference operators applied
to it, with constants depending on $x_0$, so it is smoothing
by Theorem \ref{thm:main}.

Let us also give an alternative argument relating this operator
to a corresponding operators on $\mathfrak g$.
Writing
$\varphi_v(t):=\widetilde{\phi}(|v|t)$ and using
the characterisation of pseudo-differential operators 
in Theorem \ref{thm:main}, we notice that \eqref{EQ:G:terms2}
holds if for all $x_0\in G$, the operators
$\varphi_{\exp^{-1}(x_0)}((I-\Lap_G)^{1/4})$
belong to $\Psi^0(G)$. Looking at these operators
locally near every point $x\in G$ and introducing
$\theta\in C_0^\infty(\Bbb R^n)$ such that 
$\theta\circ\exp_x^{-1}$
is supported in a small neighbourhood
near $x$, with $\exp_x:=(Z\mapsto x\exp(Z)):\mathfrak{g}\to G$
the exponential mapping centred at $x$, 
we have to show that
\begin{equation}\label{EQ:G-ps1}
  \theta(y) \varphi_{v}(B)\in \Psi^0(\Rn)
\end{equation} 
holds locally on the support of $\theta$,
for all $v=\exp^{-1}(x_0)$, where operator $B$
is the pullback by $\exp_x$ of the operator
$(I-\Lap_G)^{1/4}$ near $x$. In particular, we have
$B\in \Psi^{1/2}_{1,0}(\Rn)$, $B$ is elliptic on
the support of $\theta$, and its symbol is real-valued.

We now observe that if $v=0$, then the operator in 
\eqref{EQ:G-ps1} is the multiplication operator by a
smooth function, so that \eqref{EQ:G-ps1} is true in
this case. If $v\not=0$, we can show that the
operator in \eqref{EQ:G-ps1} is actually a smoothing
operator, so that \eqref{EQ:G-ps1} is also true.
Here
$\va_v\in C_0^\infty(\R)$ since $v\not=0$.
We denote $D_t=\frac{1}{{\rm i}2\pi}\partial_t$.
Let $f\in L^2(\Rn)$ be compactly supported, and 
let $u=u(t,x)$ be the solution to the Cauchy problem
$$D_t u=Bu, \ u(0,\cdot)=f.$$
We can write $u(t,\cdot)={\rm e}^{{\rm i}2\pi tB}f$ and we have
$u(t,\cdot)\in L^2(\Bbb R^n)$. Consequently,
$$
  \varphi_v(B)f=\int_\R \p{{\rm e}^{{\rm i}2\pi tB} f}\ 
  \widehat{\varphi_v}(t)\ {\rm d}t
  = \int_\R B^{-k} u(t,\cdot)\ D_t^k 
  \widehat{\varphi_v}(t)\ {\rm d}t,
$$
where we integrated by parts $k$ times using the relation
$u=B^{-1} D_t u$, and where we can localise to a neighbourhood
of a point $x$ at each step. Consequently, we obtain that
$\varphi_v(B) f\in H^{k/2}_{loc}(\Bbb R^n)$ for all 
$k\in\Bbb Z^+$, so that actually
$\varphi_v(B)f\in C^\infty(\Bbb R^n)$. Thus, the operator
$\varphi_v(B)$ is smoothing and \eqref{EQ:G-ps1} holds also for
$v\not=0$.
\end{proof}

\subsection{Auxiliary positive operator $P$}
We now introduce a positive operator $P$ which will 
be important for the proof of the sharp
G{\aa}rding inequality. This operator $P$ will give
a positive approximation to our operator $A$.

\begin{prop}\label{PROP:Ga-op-P}
Let $\sigma_A\in\Ccl^m_{1,0}(G)$.
Let us define an amplitude $p$ by
\begin{equation}\label{EQ:a-amplitude}
  p(x,y,\xi) := \int_G w_\xi(xz^{-1})\ w_\xi(yz^{-1})\ 
  \sigma_A(z,\xi) 
  \ {\rm d}z,
\end{equation}
where $w_\xi\in C^\infty(G)$ is as in \eqref{EQ:Ga-def-wxi}.
Let the amplitude operator $P={\rm Op}(p)$ be given by
\begin{eqnarray*}
  Pu(x) =  \int_G \sum_{[\xi]\in\widehat{G}} {\rm dim}(\xi)
  \ {\rm Tr}\left( \xi(y^{-1}x)\ p(x,y,\xi) \right)\ u(y)\ 
  {\rm d}y.
\end{eqnarray*}
Then $p\in {\cal A}^m_{1,1/2}(G)$ and the 
operator $P$ is positive. 
\end{prop}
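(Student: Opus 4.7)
The proposition has two essentially independent parts. Positivity of $P$ follows from the ``sandwich'' structure of $p(x,y,\xi)=\int_G w_\xi(xz^{-1})w_\xi(yz^{-1})\sigma_A(z,\xi)\drm z$. Substituting into $(Pu,u)_{L^2}$, writing $\xi(y^{-1}x)=\xi(y)^{\ast}\xi(x)$, and expanding the trace in a fixed basis of $\Hcal_\xi$ as $\Tr(\xi(y)^{\ast}\xi(x)\sigma_A(z,\xi))=\sum_{i,j,k}\overline{\xi(y)_{ji}}\,\xi(x)_{jk}\,\sigma_A(z,\xi)_{ki}$, the $x$- and $y$-integrations decouple. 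For each row index $j$ and each $(z,[\xi])$, defining $v^{(j)}(z,\xi)\in\C^{\dim(\xi)}$ by $v^{(j)}_k(z,\xi) := \int_G w_\xi(xz^{-1})\xi(x)_{jk}\overline{u(x)}\,\drm x$, the inner $(x,y)$-integral becomes $\sum_j \langle\sigma_A(z,\xi)^T v^{(j)}(z,\xi),v^{(j)}(z,\xi)\rangle$, which is non-negative since $\sigma_A\ge 0$ implies $\sigma_A^T\ge 0$ for Hermitian matrices. Integration in $z$ and summation in $[\xi]$ against the positive weights $\dim(\xi)$ preserve this non-negativity, giving $(Pu,u)_{L^2}\ge 0$.

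For the amplitude estimate $p\in\mathcal{A}^m_{1,1/2}(G)$, I would estimate $\|\triangle_\xi^\alpha\partial_x^\beta\partial_y^\gamma p(x,y,\xi)\|_{op}$ by commuting the derivatives and differences through the $z$-integration. The $x$-derivatives hit only $w_\xi(xz^{-1})$ and the $y$-derivatives hit only $w_\xi(yz^{-1})$, while Proposition \ref{prop:Ga-Leibniz} expands $\triangle_\xi^\alpha$ on the triple product as a finite sum of terms
\[\triangle_\xi^{\alpha_1}\partial_x^\beta(w_\xi(xz^{-1})I)\cdot\triangle_\xi^{\alpha_2}\partial_y^\gamma(w_\xi(yz^{-1})I)\cdot\triangle_\xi^{\alpha_3}\sigma_A(z,\xi)\]
with $|\alpha_j|\le|\alpha|\le|\alpha_1|+|\alpha_2|+|\alpha_3|$. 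The $\sigma_A$-factor satisfies $\|\triangle_\xi^{\alpha_3}\sigma_A\|_{op}\le C\jp{\xi}^{m-|\alpha_3|}$ from $\sigma_A\in\Ccl^m_{1,0}(G)$, and the key estimate for the $w_\xi$-factors is the $L^2$-in-position bound $\|\triangle_\xi^{\alpha_j}\partial^{\beta_j}w_\xi\|_{L^2(G)}\le C\jp{\xi}^{-|\alpha_j|+|\beta_j|/2}$. Combined with Cauchy--Schwarz in $z$ (using left-invariance of the Haar measure to identify the $L^2$-norms of translates of $w_\xi$), this yields
\[\|\triangle_\xi^\alpha\partial_x^\beta\partial_y^\gamma p(x,y,\xi)\|_{op}\le C\jp{\xi}^{m-(|\alpha_1|+|\alpha_2|+|\alpha_3|)+(|\beta|+|\gamma|)/2}\le C\jp{\xi}^{m-|\alpha|+(|\beta|+|\gamma|)/2},\]
which is exactly the required type $(1,1/2)$ amplitude inequality.

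The main obstacle is justifying the $L^2$ bound on the differenced $w_\xi$. For $|\alpha_j|=0$ it is immediate from the explicit formula \eqref{EQ:Ga-def-wxi}: each $\partial_x$ gives an extra factor $\jp{\xi}^{1/2}$, while $\mathrm{supp}\,w_\xi$ has Haar measure $\lesssim\jp{\xi}^{-n/2}$ and $\|w_\xi\|_{L^2}=1$, so $\|\partial^\beta w_\xi\|_{L^2}\le\|\partial^\beta w_\xi\|_{L^\infty}\,|\mathrm{supp}\,w_\xi|^{1/2}\le C\jp{\xi}^{|\beta|/2}$. For $|\alpha_j|\ge 1$, however, $\triangle_\xi^{\alpha_j}w_\xi$ need not be supported in the same small ball, since differences act only in the frequency variable, and the pointwise symbol-class bound from Lemma \ref{LEM:Ga-wxi} combined with $|G|=1$ would only give $\|\triangle_\xi^{\alpha_j}w_\xi\|_{L^2}\le C\jp{\xi}^{n/4-|\alpha_j|}$, losing the crucial $\jp{\xi}^{-n/4}$ gain. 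Closing this gap seems to require a finer analysis, presumably by using Lemma \ref{LEM:RTW-strong-adms} to represent $\triangle_\xi$ via matrix coefficients of a fixed unitary representation and computing the right-convolution kernel of $\triangle_\xi^{\alpha_j}(w_\xi I)$ as $q^{\alpha_j}$ times the kernel of $w_\xi I$, then exploiting the support and smoothness of $w_\xi$ via integration by parts.
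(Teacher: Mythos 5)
Your positivity argument is correct and is essentially the paper's own: the paper likewise decouples the $x$- and $y$-integrations and writes $(Pu,u)_{L^2(G)}$ as $\int_G\sum_{[\xi]}\dim(\xi)\Tr\bigl(M(z,\xi)^\ast\,\sigma_A(z,\xi)\,M(z,\xi)\bigr)\drm z$ for a suitable matrix $M(z,\xi)$ built from $w_\xi$ and $u$; your entrywise version with the vectors $v^{(j)}(z,\xi)$ and the remark $\sigma_A^T=\overline{\sigma_A}\geq 0$ is the same computation in coordinates.

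For the amplitude statement, your skeleton (Leibniz expansion of $\triangle_\xi^\alpha$ over the triple product, the exact estimate for the $\sigma_A$-factor, and the final exponent count) matches what the paper intends: the paper proves only the $\alpha=\beta=\gamma=0$ bound and defers the rest to the argument of Lemma~\ref{LEM:Ga-difaux-2}. But the step you leave open is a genuine gap: the bound $\n{\D_\xi^{\kappa}\partial^{\beta}w_\xi}_{L^2(G)}\leq C\jp{\xi}^{-|\kappa|+|\beta|/2}$ for $|\kappa|\geq 1$ is exactly what carries the $\jp{\xi}^{-n/4}$ gain per $w_\xi$-factor, and without it your argument only yields $p\in{\cal A}^{m+n/2}_{1,1/2}(G)$, which destroys the sharp remainder. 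The paper closes this point differently (and briefly) in the proof of Lemma~\ref{LEM:Ga-difaux-2}: it does not use an $L^2$ bound, but combines (i) the pointwise estimates $\n{\D_\xi^\kappa\partial^\beta w_\xi(z)}_{op}\leq C\jp{\xi}^{n/4-|\kappa|+|\beta|/2}$, which are precisely the content of the last assertion of Lemma~\ref{LEM:Ga-wxi}, namely $w_\xi I_{\dim(\xi)}\in\Ccl^{n/4}_{1,1/2}(G)$ (note this already encodes the $\jp{\xi}^{-|\kappa|}$ gain for differences, which you did not exploit beyond the lossy route), with (ii) the assertion that taking differences in $\xi$ does not increase the support in $z$, so the $z$-integration in \eqref{EQ:Ga-mult-int} still runs over a set of measure $O(\jp{\xi}^{-n/2})$. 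Items (i) and (ii) together are equivalent to your missing $L^2$ bound, and then either your Cauchy--Schwarz count or the paper's pointwise-times-measure count gives the $(1,1/2)$ amplitude inequality. So the crux is a locality statement for $\D_\xi^\kappa(w_\xi I_{\dim(\xi)})$: since $\D^\kappa$ acts by multiplying the kernel $k_z(g)=\sum_{[\eta]}\dim(\eta)\,w_\eta(z)\,\chi_\eta(g)$ by $q^\kappa$, one must show that for fixed $[\xi]$ the result, as a function of $z$, remains concentrated where ${\rm dist}(z,e)\lesssim\jp{\xi}^{-1/2}$ (up to contributions negligible for the estimate). Your closing remark points in this direction but is not carried out; to be fair, the paper itself states (ii) without detailed proof, so you have correctly isolated the crux, but as written your proposal does not establish $p\in{\cal A}^m_{1,1/2}(G)$.
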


\begin{proof}
We observe that
\begin{multline*}
\|p(x,y,\xi)\|_{op}\leq 
\int_G |w_\xi(xz^{-1})\ w_\xi(yz^{-1})| \drm z\
\p{\sup_{z\in G}\|\sigma_A(z,\xi)\|_{op}} 
\leq
C\jp{\xi}^m
\end{multline*} 
because $\|w_\xi\|^2_{L^2(G)}=1$ by Lemma \ref{LEM:Ga-wxi}.
Then $p\in {\cal A}^m_{1,1/2}(G)$ follows from
Lemma~\ref{LEM:Ga-wxi} and the Leibniz formula in
Proposition~\ref{prop:Ga-Leibniz}
by an argument similar to the one which will be given  
in the proof of Lemma~\ref{LEM:Ga-difaux-2}, so we omit it.
Let $(e_k)_{k=1}^\ell$ be an orthonormal basis for $\Bbb C^\ell$.
For matrices 
$M,Q\in\Bbb C^{\ell\times \ell}$, where $Q$ is positive,
we have
\begin{equation}\label{EQ:Ga-mat-pos}
  {\rm Tr}(M^\ast Q M)
  = \sum_{k=1}^\ell \langle M^\ast Q M e_k, e_k 
  \rangle_{\Bbb C^\ell}
  = \sum_{k=1}^\ell \langle Q M e_k, M e_k \rangle_{\Bbb C^\ell} 
  \geq 0.
\end{equation} 
Let us denote 
$$
M(z,\xi):= \int_G w_\xi(yz^{-1})\ \xi(yz^{-1})^\ast\ u(y)\ {\rm d}y.
$$
We can now show that the operator $P$ is positive:
\begin{eqnarray*}
  && \langle Pu,u\rangle_{L^2(G)} 
   =  \int_G Pu(x)\ \overline{u(x)}\ {\rm d}x \\
  & = & \int_G \int_G \sum_{[\xi]\in\widehat{G}} {\rm dim}(\xi)
  \ {\rm Tr}\left( \xi(x)\ p(x,y,\xi)\ u(y)\ \xi(y)^\ast \right)
  \ {\rm d}y\ \overline{u(x)}\ {\rm d}x \\
  & = & \int_G \sum_{[\xi]\in\widehat{G}} {\rm dim}(\xi)
  \int_G
  {\rm Tr}\left( \xi(x) \int_G w_\xi(xz^{-1}) w_\xi(yz^{-1})
    \sigma_A(z,\xi) {\rm d}z\, u(y) \xi(y)^\ast {\rm d}y \right)
  \overline{u(x)} {\rm d}x \\
  & = & \int_G \sum_{[\xi]\in\widehat{G}} {\rm dim}(\xi)
  \ {\rm Tr}\left( M(z,\xi)^\ast\ \sigma_A(z,\xi)\ 
  M(z,\xi) \right)\ {\rm d}z,
\end{eqnarray*}
which is non-negative because of \eqref{EQ:Ga-mat-pos}.
\end{proof} 

\subsection{The difference $p(x,x,\xi)-\sigma_A(x,\xi)$}

In the earlier notation,
we show here that $p(x,x,\xi)-\sigma_A(x,\xi)$
is a symbol of a bounded operator from 
$H^s(G)$ to $H^{s-(m-1)}(G)$.

\begin{lem} \label{LEM:Ga-difaux-1}
Let $s\in\R$. Then
the pseudo-differential operator with the symbol
$p(x,x,\xi)-\sigma_A(x,\xi)$
is bounded from 
$H^s(G)$ to $H^{s-(m-1)}(G)$.
\end{lem}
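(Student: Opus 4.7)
The plan is to prove the uniform symbol estimate
$$\n{\partial_x^\beta\br{p(x,x,\xi)-\sigma_A(x,\xi)}}_{op}\le C_\beta\,\jp{\xi}^{m-1}$$
for every multi-index $\beta$, after which the result follows immediately from Theorem~\ref{THM:su2-Sobolev}. The mechanism driving the estimate is the inversion-invariance and concentration of $w_\xi$ supplied by Lemma~\ref{LEM:Ga-wxi}: $w_\xi(u^{-1})=w_\xi(u)$, $\n{w_\xi}_{L^2(G)}=1$, and $|\exp^{-1}(u)|\lesssim\jp{\xi}^{-1/2}$ on $\mathrm{supp}\,w_\xi$.

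The first step will be to rewrite $p(x,x,\xi)-\sigma_A(x,\xi)$ in a form suited to parity cancellation. Substituting $u=xz^{-1}$ in \eqref{EQ:a-amplitude} via the bi-invariance of the Haar measure, and using $\int_G w_\xi(u)^2\drm u=1$, I would obtain
$$p(x,x,\xi)-\sigma_A(x,\xi)=\int_G w_\xi(u)^2\br{\sigma_A(u^{-1}x,\xi)-\sigma_A(x,\xi)}\drm u.$$
Since $w_\xi(u^{-1})=w_\xi(u)$ and $G$ is unimodular, a further change of variables $u\mapsto u^{-1}$ symmetrizes the integrand:
$$p(x,x,\xi)-\sigma_A(x,\xi)=\tfrac12\int_G w_\xi(u)^2\br{\sigma_A(u^{-1}x,\xi)+\sigma_A(ux,\xi)-2\sigma_A(x,\xi)}\drm u.$$

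On $\mathrm{supp}\,w_\xi$ one can write $u=\exp(Y)$ with $|Y|\le r\jp{\xi}^{-1/2}$, and the next step will be to Taylor-expand $Y\mapsto\sigma_A(\exp(Y)x,\xi)$ at $Y=0$. The linear term is odd in $Y$ and cancels upon symmetrization, leaving a quadratic form in $Y$ applied to second-order right-invariant derivatives of $\sigma_A(\,\cdot\,,\xi)$ at $x$, plus a cubic remainder. Since $\sigma_A\in\Ccl^m_{1,0}(G)$, all such $\sigma_A$-derivatives are bounded in $\n{\cdot}_{op}$ by $\jp{\xi}^m$, so using $\n{w_\xi}_{L^2(G)}=1$ together with $|Y|\lesssim\jp{\xi}^{-1/2}$ gives
$$\n{p(x,x,\xi)-\sigma_A(x,\xi)}_{op}\lesssim \jp{\xi}^{-1}\jp{\xi}^m+\jp{\xi}^{-3/2}\jp{\xi}^m\lesssim\jp{\xi}^{m-1}.$$

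For the $x$-derivative bounds, observe that after the substitution the factor $w_\xi(u)^2$ no longer depends on $x$, and the left-invariant operators $\partial_x^\beta$ commute with the left translations $x\mapsto u^{\pm1}x$. Hence $\partial_x^\beta$ passes under the integral and acts only on $\sigma_A$, and the same argument applied with $\partial_x^\beta\sigma_A\in\Ccl^m_{1,0}(G)$ in place of $\sigma_A$ yields the uniform symbol estimate in $x$. The main obstacle is ensuring the parity cancellation of the first-order Taylor term, which hinges on the inversion-invariance of $w_\xi$ together with unimodularity of $G$; the remainder analysis is then routine, and Theorem~\ref{THM:su2-Sobolev} closes the argument.
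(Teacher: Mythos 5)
Your argument is correct and follows essentially the same route as the paper: reduce to the uniform estimate $\|\partial_x^\beta(p(x,x,\xi)-\sigma_A(x,\xi))\|_{op}\leq C_\beta\jp{\xi}^{m-1}$ via Theorem~\ref{THM:su2-Sobolev}, rewrite the difference as $\int_G w_\xi(u)^2[\sigma_A(u^{-1}x,\xi)-\sigma_A(x,\xi)]\,\mathrm{d}u$, and kill the first-order Taylor term by parity, using the evenness of $w_\xi$ and the $\jp{\xi}^{-1/2}$-size of its support. The only cosmetic difference is that you implement the cancellation by symmetrizing $u\mapsto u^{-1}$ into a second difference, whereas the paper chooses odd first-order Taylor polynomials $q_\gamma$ so that $\int_G w_\xi(z)^2 q_\gamma(z)\,\mathrm{d}z=0$; these are the same mechanism.
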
 

\begin{proof}
By Theorem~\ref{THM:su2-Sobolev} it is enough to show that
$$
  \left\| \partial_x^\beta 
  \left( p(x,x,\xi)-\sigma_A(x,\xi) \right) \right\|_{op}
  \leq C_\beta \langle\xi\rangle^{m-1}
$$
holds for every multi-index $\beta$.
By Lemma~\ref{LEM:Ga-wxi} we have
$$
  \partial_x^\beta \left( p(x,x,\xi)-\sigma_A(x,\xi) \right)
  = \int_G w_\xi(z)^2 \left(
  \partial_x^\beta \sigma_A(xz^{-1},\xi)
    - \partial_x^\beta\sigma_A(x,\xi) \right)\ {\rm d}z. 
$$
We notice that ${\rm dist}(z,e)\leq C\jp{\xi}^{-1/2}$ on the
support of $w_\xi$, and we can use the Taylor expansion of
$\partial_x^\beta\sigma_A(xz^{-1},\xi)$ at $x$ to get
\begin{equation}\label{EQ:Ga-symb-T-exp}
\partial_x^\beta\sigma_A(xz^{-1},\xi)=
\partial_x^\beta\sigma_A(x,\xi)+
\sum_{|\gamma|=1}\partial_x^{(\gamma)}
\partial_x^\beta\sigma_A(x,\xi)
q_\gamma(z)+O({\rm dist}(z,e)^2).
\end{equation} 
Taking the Taylor polynomials $q_\gamma$ to be odd,
$q_\gamma(z)=-q_\gamma(z^{-1})$, and using the evenness of
$w_\xi$ from Lemma~\ref{LEM:Ga-wxi}, 
we can conclude that $\int_G w_\xi(z)^2 \ q_\gamma(z)\drm z=0$.
Since for all $\beta$ and $\gamma$ we have 
$\left\|\partial_x^{(\gamma)}\partial_x^\beta
\sigma_A(x,\xi)\right\|_{op}
\leq C\jp{\xi}^m$, we can estimate
$$
\n{\partial_x^\beta 
\left( p(x,x,\xi)-\sigma_A(x,\xi) \right)}_{op}\leq 
C\jp{\xi}^m\sum_{|\gamma|=2} 
\int_G  w_\xi(z)^2 |q_\gamma(z)|\drm z\leq C\jp{\xi}^{m-1}
$$
because $|q_\gamma(z)|\leq C\jp{\xi}^{-1}$ on the support
of $w_\xi$, for $|\gamma|=2$. 
\end{proof} 

\subsection{The difference $\sigma_P(x,\xi)-p(x,x,\xi)$}
Let $\sigma_P$ be the matrix symbol of the operator
$P$ from Proposition~\ref{PROP:Ga-op-P}.

\begin{lem} \label{LEM:Ga-difaux-2}
Let $s\in\R$. Then
the pseudo-differential operator with the symbol
$\sigma_P(x,\xi)-p(x,x,\xi)$
is bounded from 
$H^s(G)$ to $H^{s-(m-1)}(G)$.
\end{lem}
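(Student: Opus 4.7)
My plan is to apply Proposition~\ref{PROP:Ga-amps} to the amplitude $p\in{\cal A}^m_{1,1/2}(G)$, which yields the asymptotic expansion
$$
\sigma_P(x,\xi)\sim\sum_{\alpha\ge 0}\frac{1}{\alpha!}\,\partial_y^{(\alpha)}\triangle_\xi^\alpha p(x,y,\xi)\Big|_{y=x},
$$
whose $\alpha=0$ term is exactly $p(x,x,\xi)$. Terms with $|\alpha|\ge 2$ have naive order $\le m-|\alpha|/2\le m-1$, and truncating at a large enough $N$ leaves a tail in $\Ccl^{m-N/2}_{1,1/2}(G)$ that is negligibly smoothing. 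The genuinely delicate piece is the first-order contribution
$$
T_1(x,\xi):=\sum_{|\gamma|=1}\partial_y^{(\gamma)}\triangle_\xi^\gamma p(x,y,\xi)\Big|_{y=x},
$$
whose naive order is only $m-1/2$. The heart of the proof is to recover the missing half-order from the evenness of $w_\xi$.

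After the Haar-preserving substitution $z=u^{-1}x$,
$$
\triangle_\xi^\gamma p(x,y,\xi)=\int_G w_\xi(u)\,w_\xi(yx^{-1}u)\,\triangle_\xi^\gamma\sigma_A(u^{-1}x,\xi)\,\drm u.
$$
Using the centrality of $w_\xi$ (Lemma~\ref{LEM:Ga-wxi}), the derivative $\partial_y^{(\gamma)}w_\xi(yx^{-1}u)\big|_{y=x}$ can, via conjugation by $x$, be written as $L_Z w_\xi(u)$ with $Z=\mathrm{Ad}(x)Y\in\mathfrak g$. Proposition~\ref{PROP:Ga-even-odd} with $k=1$ applied to the even central $w_\xi$ then shows that $L_Z w_\xi$ is \emph{odd}, so $u\mapsto w_\xi(u)L_Z w_\xi(u)$ is odd. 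A first-order Taylor expansion of $\triangle_\xi^\gamma\sigma_A(u^{-1}x,\xi)$ about $u=e$, exactly as in the proof of Lemma~\ref{LEM:Ga-difaux-1}, now shows that the $u$-independent leading term $\triangle_\xi^\gamma\sigma_A(x,\xi)$ factors out of the integral and is annihilated by this oddness, while the Taylor remainder contributes the additional gain $\mathrm{dist}(u,e)\lesssim\jp{\xi}^{-1/2}$ on $\mathrm{supp}\,w_\xi$; combined with the $\jp{\xi}^{1/2}$ mass of $\int_G w_\xi(u)\,L_Z w_\xi(u)\,\drm u$ (coming from the $\jp{\xi}^{n/4+1/2}$ pointwise size of $L_Z w_\xi$ on a support of Haar volume $\jp{\xi}^{-n/2}$), this yields $\|T_1(x,\xi)\|_{op}\le C\jp{\xi}^{m-1}$.

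The main obstacle is to propagate this cancellation through every $x$-derivative, so as to obtain
$$
\|\partial_x^\beta(\sigma_P(x,\xi)-p(x,x,\xi))\|_{op}\le C_\beta\,\jp{\xi}^{m-1}
$$
uniformly in $\beta$, as required to invoke Theorem~\ref{THM:su2-Sobolev}. A naive $\partial_x$ acting on $w_\xi(yx^{-1}u)$ costs a factor $\jp{\xi}^{1/2}$, threatening to destroy the gained half-order. One has to verify that such differentiations only spawn further $L_{Z'}w_\xi$ factors via $\mathrm{Ad}(x)$-twisted chains of left-invariant vector fields, each preserving the central/even--odd structure of the integrand, so that Proposition~\ref{PROP:Ga-even-odd} (for higher $k$, with the symmetric part even and the antisymmetric part of strictly lower order by commutator gains) continues to annihilate the leading Taylor term, and the Taylor-remainder bookkeeping carries through uniformly in $\beta$; a similar accounting then handles the higher $(|\alpha|\ge 2)$ terms and the asymptotic tail.
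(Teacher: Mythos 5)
Your skeleton is the same as the paper's (expand $\sigma_P$ via Proposition~\ref{PROP:Ga-amps}, note the $\alpha=0$ term is $p(x,x,\xi)$, dispose of $|\alpha|\ge 2$ and the tail by order counting, and rescue the $|\alpha|=1$ term by the evenness of $w_\xi$ plus a Taylor expansion of $\sigma_A$), but two steps in your write-up do not hold as stated. First, the displayed identity
$\triangle_\xi^\gamma p(x,y,\xi)=\int_G w_\xi(u)\,w_\xi(yx^{-1}u)\,\triangle_\xi^\gamma\sigma_A(u^{-1}x,\xi)\,\drm u$
is false: the factors $w_\xi$ depend on $\xi$ (indeed $w_\xi I_{\dim(\xi)}\in\Ccl^{n/4}_{1,1/2}(G)$), so a difference operator applied to $p$ must be distributed by the finite Leibniz formula of Proposition~\ref{prop:Ga-Leibniz}, exactly as in \eqref{EQ:Ga-mult-int}. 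The cross terms you omit, where a difference lands on a $w_\xi$ factor and not on $\sigma_A$, are also only $O(\jp{\xi}^{m-1/2})$ by naive counting when $|\alpha|=1$, so they need their own cancellation; your argument never sees them. The clean fix (which is what the paper does) is to Taylor-expand $\sigma_A(z^{-1}x,\xi)$ at $x$ \emph{before} applying any difference operators: the coefficient of the leading term is $\int_G w_\xi(z)\,(\partial_z^\alpha w_\xi)(z)\drm z$, which vanishes identically in $\xi$ by even--odd parity, so it stays zero after $\triangle_\xi^\alpha$ and the Leibniz expansion is only needed on the already half-order-improved remainder.

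Second, the part you yourself call the main obstacle --- propagating the cancellation through every $\partial_x^\beta$ when derivatives hit $w_\xi(yx^{-1}u)$, with ${\rm Ad}(x)$-twisted fields, higher-$k$ versions of Proposition~\ref{PROP:Ga-even-odd} and ``commutator gains'' --- is only announced, not proved, and it is in fact an artifact of your choice of variables. If, as in \eqref{EQ:Ga-mult-int0}, you take the $y$-derivatives right-invariant and the $x$-derivatives left-invariant, then
$\partial_x^\beta\bigl(\left.\triangle_\xi^\alpha\partial_y^{\alpha}p(x,y,\xi)\right|_{y=x}\bigr)
=\triangle_\xi^\alpha\int_G w_\xi(z)\,(\partial_z^\alpha w_\xi)(z)\,\partial_x^\beta\sigma_A(z^{-1}x,\xi)\drm z$:
the $y$-derivative becomes $(\partial^\alpha w_\xi)(z)$ with no $x$-dependence, and every $x$-derivative falls on $\sigma_A(z^{-1}x,\xi)$ alone, costing nothing in $\jp{\xi}$ since $\sigma_A\in\Ccl^m_{1,0}(G)$. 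The even/odd structure of the $w$-factors is then untouched and the bound $\|\cdot\|_{op}\le C_\beta\jp{\xi}^{m-1}$ is uniform in $\beta$ for free, after which Theorem~\ref{THM:su2-Sobolev} applies. As written, your proposal leaves both the Leibniz cross terms and the uniformity in $\beta$ unestablished, so it is not yet a proof, though both gaps are repairable along the lines above.
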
 

\begin{proof}
Observe that for a fixed $s\in \R$,
it is enough to take sufficiently many
derivatives (and not infinitely many) for the
Sobolev boundedness in Theorem~\ref{THM:su2-Sobolev}.
Thus it is enough to prove that 
for sufficiently many $\beta\in\Bbb N_0^n$ it holds that
$$
  \left\| \partial_x^\beta(\sigma_P(x,\xi) - p(x,x,\xi)) \right\|_{op}
  \leq C_\beta \langle\xi\rangle^{m-1}.
$$
By an argument in the proof of 
Proposition~\ref{PROP:Ga-amps} we have the 
expansion
$$
  \sigma_P(x,\xi) \sim \sum_{\alpha\geq 0} \frac{1}{\alpha!}
  \ \left.\triangle_\xi^\alpha \partial_y^{(\alpha)} 
  p(x,y,\xi)\right|_{y=x},
$$
whose asymptotic properties we will discuss below.
Instead of studying the terms
$
  \partial_x^\beta 
  \p{\left.  \triangle_\xi^\alpha \partial_y^{(\alpha)}
    p(x,y,\xi)\right|_{y=x}},
$
we may study
$
  \partial_x^\beta \p{\left. 
  \triangle_\xi^\alpha \partial_y^\alpha
  p(x,y,\xi) \right|_{y=x}}
$
as well.
Moreover, abusing the notation slightly,
without loss of generality we can look only at
the right-invariant derivatives $\partial_y^\alpha$ and
left-invariant derivatives $\partial_x^\beta$.
Recalling that
\begin{eqnarray*}
  p(x,y,\xi)  = 
  \int_G w_\xi(xz^{-1})\ w_\xi(yz^{-1})\ 
  \sigma_A(z,\xi)\ {\rm d}z, 
\end{eqnarray*}
we notice that
\begin{equation}\label{EQ:Ga-mult-int0}
   \partial_x^\beta\p{\left. \triangle_\xi^\alpha 
  \partial_y^{\alpha} p(x,y,\xi)
    \right|_{y=x} }
   =  \triangle_\xi^\alpha
  \int_G w_\xi(z)\ (\partial_z^\alpha w_\xi)(z)
  \ \partial_x^\beta\sigma_A(z^{-1}x,\xi)\ {\rm d}x.
\end{equation}
We notice also that by Remark~\ref{REM:Ga-rhodel-classes}
we can replace differences 
$\Delta_\xi$ by $\D_\xi$ with a suitable 
correction for multi-indices.
The application of $\D^\alpha$ here
introduces (due to the Leibniz formula
in Proposition~\ref{prop:Ga-Leibniz}) a 
finite sum of terms of the type
\begin{equation}\label{EQ:Ga-mult-int}
  \int_G \left(\D_\xi^\kappa w_\xi(z)\right)
  \left(\D_\xi^\lambda \partial_z^\alpha w_\xi(z)\right)
  \ \left( \D_\xi^\mu \sigma_A(z^{-1}x,\xi) \right) {\rm d}z,
\end{equation} 
where $|\kappa+\lambda+\mu|\geq|\alpha|$.
Recalling that $w_\xi\in\Ccl^{n/4}_{1,1/2}$ by Lemma~\ref{LEM:Ga-wxi},
we get that
$$
\abs{\left(\D_\xi^\kappa w_\xi(z)\right)
  \left(\D_\xi^\lambda \partial_z^\alpha w_\xi(z)\right)
  \ \left( \D_\xi^\mu \sigma_A(z^{-1}x,\xi) \right)}\leq 
  C\jp{\xi}^{m+n/2-|\alpha|/2}.
$$
Taking into account that the support of $z\mapsto w_\xi(z)$ 
is contained in
the set of measure $C\jp{\xi}^{-n/2}$ by Lemma~\ref{LEM:Ga-wxi},
and that taking differences in $\xi$
does not increase the support in $z$,
we get that the integral in \eqref{EQ:Ga-mult-int} can be estimated
by $C\jp{\xi}^{m-|\alpha|/2}$. Thus, we get
\begin{equation}\label{EQ:Ga-mult-int2}
  \n{ \partial_x^\beta \p{\left. \triangle_\xi^\alpha 
  \partial_y^{\alpha} p(x,y,\xi)
    \right|_{y=x}} }_{op} \leq C\jp{\xi}^{m-|\alpha|/2}.
\end{equation}
For $|\alpha|\geq 2$ this implies the desired bound by
$C\jp{\xi}^{m-1}$ for the Sobolev boundedness of the 
corresponding operator. 
Now, assume that $|\alpha|=1$.
Taking the Taylor expansion of $\sigma_A(z^{-1}x,\xi)$ at $x$ similar
to the one in \eqref{EQ:Ga-symb-T-exp} we see that the first term
vanishes:
$$
\int_G w_\xi(z) \ (\partial_z^\alpha w_\xi)(z) \drm z =0
$$
for $|\alpha|=1$ because functions $w_\xi$ 
and $\partial_z^\alpha w_\xi$
are even and odd, respectively, see
Proposition~\ref{PROP:Ga-even-odd}. 
Consequently, for $|\gamma|\geq 1$, 
we can estimate
$$
\abs{w_\xi(z) \ (\partial_z^\alpha w_\xi)(z)\ q_\gamma(z)}
\leq C\jp{\xi}^{n/2+|\alpha|/2-|\gamma|/2},
$$
which together with \eqref{EQ:Ga-mult-int0} gives
$$\n{\partial_x^\beta \p{\left. \triangle_\xi^\alpha 
  \partial_y^{\alpha} p(x,y,\xi)
    \right|_{y=x}} }_{op} \leq C\jp{\xi}^{m-|\alpha|/2-|\gamma|/2}
    \leq C\jp{\xi}^{m-1}
$$
because $|\alpha|=1$ and $|\gamma|\geq 1$.
Finally, let us look at the remainder
$$
  \sigma_{R_N}(x,\xi)
  = \sigma_P(x,\xi) - \sum_{|\alpha|<N} \frac{1}{\alpha!}
  \ \triangle_\xi^\alpha \left.\partial_y^{(\alpha)} p(x,y,\xi)\right|_{y=x}.
$$
By the arguments similar to the above we can see that 
$$
  \left\| \partial_x^\beta \sigma_{R_N}(x,\xi) \right\|_{op}
  \leq C_\beta \langle\xi\rangle^{m+n/2+|\beta|/2-N/2},
$$
so that for every $s,t\in\Bbb R$
there exists a sufficiently large $N_{st}$ such that 
$R_N$ is bounded from $H^s(G)$ to $H^t(G)$
whenever $N\geq N_{st}$.
This concludes the proof.
\end{proof} 

\subsection{Proof of Theorem~\ref{THM:Garding}}
Let $Q=A-P$ with operator $P$ as in Proposition~\ref{PROP:Ga-op-P}.
Let $u\in C^\infty(G)$.
Then $A=P+Q$ and the positivity of $P$ implies
$$
\Repa (Au,u)_{L^2(G)}=\Repa (Pu,u)_{L^2(G)}+\Repa (Qu,u)_{L^2(G)}\geq
\Repa (Qu,u)_{L^2(G)}.
$$
Let now $P_0=\Op(p(x,x,\xi))$.
Writing $Q=(A-P_0)+(P_0-P)$, we have
$$\sigma_{A-P_0}(x,\xi)=\sigma_A(x,\xi)-p(x,x,\xi) 
\;\textrm{ and }\;
\sigma_{P_0-P}(x,\xi)=p(x,x,\xi)-\sigma_P(x,\xi).$$
Consequently, $A-P_0$ and $P_0-P$ are
bounded from $H^{(m-1)/2}(G)$ to $H^{-(m-1)/2}(G)$ by
Lemma~\ref{LEM:Ga-difaux-1} and Lemma~\ref{LEM:Ga-difaux-2},
respectively. Hence
$Q$ is bounded from $H^{(m-1)/2}(G)$ to $H^{-(m-1)/2}(G)$, so that
$$
  |\Repa(Qu,u)_{L^2(G)}|\leq \left\|Qu\right\|_{H^{-(m-1)/2}(G)}
  \left\|u\right\|_{H^{(m-1)/2}(G)}
  \leq C\left\|u\right\|^2_{H^{(m-1)/2}(G)},
$$
completing the proof of Theorem~\ref{THM:Garding}.

\subsection{Proof of Corollary~\ref{COR:Ga-Taylorthm}}
We note that the assumption
$\left\|\sigma_A(x,\xi)\right\|_{op}\leq C$ implies
that for any $\theta\in\R$ be have the inequality
$\Repa(C-\erm^{\irm\theta}\sigma_A(x,\xi))\geq 0$.
Consequently, the sharp G{\aa}rding inequality in
Theorem~\ref{THM:Garding} implies that we have
$$
\Repa((C-\erm^{\irm\theta}A)u,u)_{L^2(G)}\geq 
-C'\left\|u\right\|_{L^2(G)}^2
$$
for all $u\in L^2(G)$. From this it follows that
$\Repa(\erm^{\irm\theta}(Au,u)_{L^2(G)})\leq 
C''\left\|u\right\|_{L^2(G)}^2$,
so that $|(Au,u)_{L^2(G)}|\leq C''\left\|u\right\|_{L^2(G)}^2$, 
completing the proof of Corollary~\ref{COR:Ga-Taylorthm}.

\subsection{Proof of Corollary~\ref{COR:Ga-Kgothm}}
Let us define
$$B(x,\xi)=M^2\jp{\xi}^{2m+2s} I_{\dim\xi}-
\sigma_A(x,\xi)^*\sigma_A(x,\xi)\jp{\xi}^{2s}.$$
By the Leibniz formula, $B\in\mathscr S^{2m+2s}(G)$,
and $B(x,\xi)\geq 0$ due to the definition of $M$.
Consequently, by Theorem \ref{THM:Garding}, we have
$$\Repa(\Op(B)u,u)_{L^2(G)}\geq -C\|u\|_{H^{m+s-1/2}(G)}.$$
Recall that for the bi-invariant Laplace-Beltrami operator $\Lap_G$ on $G$,
the symbol of $I-\Lap_G$ is $\jp{\xi}^2$, so that
$\|Au\|_{H^s(G)}^2=(A^*(I-\Lap_G)^{s}Au,u)_{L^2(G)}$.
On the other hand,
$$\Op(B)+A^*(I-\Lap_G)^{s}A-M^2(I-\Lap_G)^{m+s}\in\Psi^{2m+2s-1}(G)$$
because its symbol is in $\mathscr S^{2m+2s-1}(G)$ by
the composition formula for
pseudo-differen\-tial operators 
(\cite[Thm~10.7.9]{RT-book}) combined with 
the formula for the adjoint operator
(\cite[Thm~10.7.10]{RT-book}).
Combining these facts we obtain the statement of
Corollary~\ref{COR:Ga-Kgothm} by Theorem
\ref{THM:su2-Sobolev}.

\end{document}